\thanks{DC and DG partially supported by NSF grant DMS 0748283.}
\newtheorem{theorem}{Theorem}
\newtheorem*{acknowledgement}{Acknowledgement}
\newtheorem{corollary}[theorem]{Corollary}
\newtheorem{definition}[theorem]{Definition}
\newtheorem{lemma}[theorem]{Lemma}
\newtheorem{problem}{Problem}
\newtheorem{proposition}[theorem]{Proposition}
\newtheorem{remark}[theorem]{Remark}
\numberwithin{equation}{section}
\numberwithin{theorem}{section}
\begin{document}
\title{Regge's Einstein-Hilbert functional on the double tetrahedron}
\author{Daniel Champion}
\address[Daniel Champion]{University of Arizona, Tucson AZ, 85721}
\email{champion@math.arizona.edu}
\author{David Glickenstein}
\address[David Glickenstein]{University of Arizona, Tucson AZ, 85721}
\email{glickenstein@math.arizona.edu}
\author{Andrea Young}
\address[Andrea Young]{University of Arizona, Tucson AZ, 85721}
\email{ayoung@math.arizona.edu}

\begin{abstract}
The double tetrahedron is the triangulation of the three-sphere gotten by
gluing together two congruent tetrahedra along their boundaries. As a
piecewise flat manifold, its geometry is determined by its six edge lengths,
giving a notion of a metric on the double tetrahedron. We study notions of
Einstein metrics, constant scalar curvature metrics, and the Yamabe problem on
the double tetrahedron, with some reference to the possibilities on a general
piecewise flat manifold. The main tool is analysis of Regge's Einstein-Hilbert
functional, a piecewise flat analogue of the Einstein-Hilbert (or total scalar
curvature) functional on Riemannian manifolds. We study the
Einstein-Hilbert-Regge functional on the space of metrics and on discrete
conformal classes of metrics.

\end{abstract}
\keywords{Regge, Einstein-Hilbert, piecewise flat, Einstein, constant scalar curvature, Yamabe}
\subjclass{52B70, 52C26, 83C27}
\maketitle

\section{Introduction}

\renewcommand{\arraystretch}{1.5}It is well-known that Ricci-flat metrics on
closed Riemannian manifolds of dimension at least three are critical points of
the Einstein-Hilbert functional%
\[
\mathcal{EH}\left(  M,g\right)  =\int_{M}R_{g}dV_{g},
\]
where $R_{g}$ and $dV_{g}$ are the scalar curvature and volume form for the
Riemannian manifold $\left(  M,g\right)  .$ Since there are topological
restrictions to being Ricci-flat (e.g., the Cheeger-Gromoll splitting theorem
\cite{CG}), one may restrict to the subset of Riemannian manifolds with volume
equal to $1$ so that critical points of the constrained problem are Einstein
manifolds. Equivalently, one can consider a normalized Einstein-Hilbert
functional
\[
\mathcal{NEH}\left(  M^{n},g\right)  =\frac{\int_{M}R_{g}dV_{g}}{\left(
\int_{M}dV_{g}\right)  ^{\left(  n-2\right)  /n}},
\]
whose critical points are Einstein manifolds. Einstein manifolds are of
interest because the Einstein metric is, in some sense, a most symmetric or
\textquotedblleft best\textquotedblright\ geometry for the manifold. In trying
to prove a classification theorem such as Thurston's geometrization
conjecture, one may try to find a best geometry by trying to optimize a
geometric functional such as $\mathcal{EH}$ and by studying both convergence
and degenerations to try to capture all possible \textquotedblleft
best\textquotedblright\ geometries (see \cite{And}).

Related to the study of $\mathcal{EH}$ and Einstein manifolds is the
well-known Yamabe problem, which asks whether one can find a constant scalar
curvature metric within a conformal class or, equivalently, if one can find a
critical point for $\mathcal{NEH}$ restricted to a conformal class. The
solution was completed by R. Schoen, based on important contributions from
Yamabe, Trudinger, and Aubin (see \cite{LP} for an overview and complete proof).

In assigning geometry to a topological manifold, an alternative to the
Riemannian approach is that of piecewise flat geometry. A piecewise flat
manifold is a triangulation together with edge lengths that determine a
Euclidean geometry on each simplex in the triangulation. In 1961, T. Regge
\cite{Reg} suggested a functional defined on piecewise flat manifolds which is
analogous to $\mathcal{EH}$. We call this functional the
Einstein-Hilbert-Regge functional and denote it as $\mathcal{EHR}$. Study of
this functional as an action for general relativity has led to a wide array of
work on Regge calculus and lattice gravity (for a survey, see \cite{Ham}).\ It
was later shown that $\mathcal{EHR}$ and $\mathcal{EH}$ are related in the
sense that appropriately finer piecewise flat triangulations which converge to
a Riemannian manifold lead to convergence of the functionals. In fact, it was
proven that the associated curvature measures converge \cite{CMS}. Thus
$\mathcal{EHR}$ is a discretization of $\mathcal{EH}$, and could potentially
be used to approximate $\mathcal{EH}$. Such an approach is an alternative to
discretizing the Einstein equations themselves. By discretizing the functional
instead of its Euler-Lagrange equation, we hope to produce an approximation of
the Euler-Lagrange equation whose behavior mimics that of the smooth case.
This approach has been applied in a number of contexts, such as computer
graphics, computational mechanics, and computational dynamics, and it is the
main focus of the fields of discrete differential geometry and discrete
exterior calculus (see, e.g., \cite{BSu}, \cite{DHM}, \cite{DP}, \cite{MDSB}).

In addition, we can use a definition of conformal class in \cite{G3} to
formulate a discrete version of the Yamabe problem. However, this does not
allow us to reformulate the functional in the same way as in the smooth
setting, which allows $\mathcal{NEH}$ to be rewritten in a relatively simply
way as function of the conformal factor. Instead, we are forced to work
entirely with variation formulas for curvature.

The purpose of this paper is to consider the Einstein-Hilbert-Regge functional
on the simplest possible triangulation of a three-manifold without boundary:
the double tetrahedron. Even on this small triangulation, the behavior of the
$\mathcal{EHR}$ functional is rich and complex. In particular, on the double
tetrahedron we do not have a complete answer to the uniqueness of Einstein
metrics, a complete understanding of the Yamabe problem, or a calculation of
the Yamabe invariant.

In \S \ref{background section}, we give some background on piecewise flat
manifolds, and we define the $\mathcal{EHR}$ functional and two normalized
versions of it. Additionally, we describe the geometry of the double
tetrahedron, and we set the notation for the remainder of the paper. In
\S \ref{metric section}, we consider length variations of piecewise flat
metrics on the double tetrahedron. Critical points of the normalized
$\mathcal{EHR}$ functionals are geometrically significant and yield
definitions of Einstein metrics in the piecewise flat setting. We study the
convexity of the functionals at these points. In \S \ref{conformal section},
we discuss discrete conformal variations of a piecewise flat metric described
in \cite{G3} (following \cite{Luo1}, \cite{RW}, \cite{SSP}). The critical
points of the normalized $\mathcal{EHR}$ functionals with respect to a
conformal variation give rise to a notion of constant scalar curvature
piecewise flat metrics. On the double tetrahedron, we are able to provide a
partial classification of such metrics and are able to show existence in every
conformal class. Additionally, we study the convexity of the curvature
functionals at Einstein metrics. Finally, in \S \ref{general section} we
discuss the Yamabe invariant on both the double tetrahedron and on general
piecewise flat manifolds.

\begin{acknowledgement}
We would like to thank the other members of the GEOCAM project, especially
Alex Henniges, for many conversations on these topics. We would also like to
acknowledge helpful conversations with Jeff Weeks and Tom Banchoff.
\end{acknowledgement}

\section{Background and notation\label{background section}}

In this section we will secure notation for the rest of the paper. Most of the
notation follows \cite{G3}. We will also provide the necessary background on
piecewise flat manifolds, and we will define the double tetrahedron.

\subsection{Geometry of the tetrahedron\label{section tetra geometry}}

Consider a Euclidean tetrahedron determined by four vertices numbered
$1,2,3,4.$ The tetrahedron has six edge lengths, and we denote the length of
the edge between vertices $i$ and $j$ by $\ell_{ij}.$ Since edge lengths arise
from a nondegenerate tetrahedron, they satisfy a particular condition.

\begin{definition}
Consider the matrix $A:$%
\[
A=%
\begin{bmatrix}
0 & 1 & 1 & 1 & 1\\
1 & 0 & \ell_{12}^{2} & \ell_{13}^{2} & \ell_{14}^{2}\\
1 & \ell_{12}^{2} & 0 & \ell_{23}^{2} & \ell_{24}^{2}\\
1 & \ell_{13}^{2} & \ell_{23}^{2} & 0 & \ell_{34}^{2}\\
1 & \ell_{14}^{2} & \ell_{24}^{2} & \ell_{34}^{2} & 0
\end{bmatrix}
.
\]
Let $\operatorname{CM}_{3}=\det\left(  A\right)  $. \ 
\end{definition}

The quantity $\operatorname{CM}_{3}$ is related to the volume of the
tetrahedron:%
\[
Vol=\sqrt{\frac{CM_{3}}{288}}.
\]
The quantity $\operatorname{CM}_{3}$ is a special case of a Cayley-Menger
determinant $\operatorname{CM}_{n}$, which determines the volume of an
$n$-simplex in a similar way. If $\operatorname{CM}_{3}\leq0$, then there is
no nondegenerate Euclidean tetrahedron with those edge lengths. Notice that
$\operatorname{CM}_{3}$ is a polynomial of degree six in the edge lengths.

The angles of a Euclidean triangle are determined by the edge lengths via the
cosine law. The dihedral angles of a tetrahedron can then be calculated from
the angles at the faces using the spherical cosine law. We use $\beta_{e}$ to
refer to the dihedral angle of a tetrahedron at edge $e.$ If we wish to
emphasize that it is in tetrahedron $t,$ we denote it as $\beta_{e<t}.$ In the
sequel, $\tau<\sigma$ or $\sigma>\tau$ will mean that $\tau$ is a sub-simplex
of $\sigma.$

\subsection{Piecewise flat manifolds}

In this section we recall some definitions related to piecewise flat manifolds.

The double tetrahedron is a particular case of a \emph{triangulated piecewise
flat manifold}. By a triangulation, we mean a collection of simplices
identified along sub-simplices. Note that the triangulation need not be a
simplicial complex (for instance, in the double tetrahedron there are two
tetrahedra associated to the same collection of vertices). The dimension of a
triangulation is that of its highest dimensional simplex. A three-dimensional
triangulation $\mathcal{T}=\left(  V,E,F,T\right)  $ has a collection of
vertices (denoted $V$), edges (denoted $E$), faces (denoted $F$), and
tetrahedra (denoted $T$). A triangulated piecewise flat manifold is denoted as
$\left(  M,\mathcal{T},\ell\right)  ,$ where $M$ is a manifold, $\mathcal{T}$
is a triangulation of $M,$ and $\ell$ is a metric according to the following definition.

\begin{definition}
\label{met def gen}A vector $\ell\in\mathbb{R}^{\left\vert E\right\vert }$
such that each simplex can be realized as a Euclidean simplex with edge
lengths determined by $\ell$ is called a \emph{metric} for the triangulated
manifold $\left(  M,\mathcal{T}\right)  ,$ and $\left(  M,\mathcal{T}%
,\ell\right)  $ is called a \emph{triangulated piecewise flat manifold}. The
space of all metrics is denoted $\mathfrak{met}\left(  M,\mathcal{T}\right)
.$
\end{definition}

Note that the condition for a metric can be described using Cayley-Menger
determinants of the type described in Section \ref{section tetra geometry}.

We will restrict to the case that $M$ is three-dimensional. There are several
quantities associated to $\left(  M,\mathcal{T},\ell\right)  $:

\begin{definition}
The \emph{edge curvature} of an edge $e$ is
\begin{equation}
K_{e}=\left(  2\pi-\sum_{t\in T}\beta_{e<t}\right)  \ell_{e},
\end{equation}
where $\beta_{e<t}$ is the dihedral angle at edge $e$ in tetrahedron $t$, and
$\ell_{e}$ is the edge length.
\end{definition}

Now we can define some functionals on piecewise flat manifolds.

\begin{definition}
The \emph{total length} of $\left(  M,\mathcal{T},\ell\right)  $ is
\begin{equation}
\mathcal{L}\left(  M,\mathcal{T},\ell\right)  =\sum_{e\in E}\ell_{e}.
\end{equation}
Let $V_{t}$ be the volume of tetrahedron $t.$ Then the \emph{volume} of
$\left(  M,\mathcal{T},\ell\right)  $ is
\begin{equation}
\mathcal{V}\left(  M,\mathcal{T},\ell\right)  =\sum_{t\in T}V_{t}.
\end{equation}
The \emph{Einstein-Hilbert-Regge functional} is
\begin{equation}
\mathcal{EHR}\left(  M,\mathcal{T},\ell\right)  =\sum_{e\in E}K_{e}.
\end{equation}

\end{definition}

We will also consider two normalizations of the $\mathcal{EHR}$ functional.
Volume normalization is quite natural and is the usual normalization
considered in the Riemannian setting (see \S \ref{general section}). However,
since the formula for volume of a simplex is quite complicated, one may also
consider a normalization which is linear in the edge lengths.

\begin{definition}
The \emph{length normalized Einstein-Hilbert-Regge functional} is
\begin{equation}
\mathcal{LEHR}\left(  M,\mathcal{T},\ell\right)  =\frac{\mathcal{EHR}\left(
M,\mathcal{T},\ell\right)  }{\mathcal{L}\left(  M,\mathcal{T},\ell\right)  }.
\label{LEHR}%
\end{equation}
The \emph{volume normalized Einstein-Hilbert-Regge functional} is
\begin{equation}
\mathcal{VEHR}\left(  M,\mathcal{T},\ell\right)  =\frac{\mathcal{EHR}\left(
M,\mathcal{T},\ell\right)  }{\mathcal{V}\left(  M,\mathcal{T},\ell\right)
^{1/3}}. \label{VEHR}%
\end{equation}

\end{definition}

The normalizations are defined so that the functionals take the same value if
all lengths are scaled by the same positive constant.

Following \cite{G3}, we can now define what we consider to be Einstein
metrics, which will depend on the normalization. We call these the Einstein
metrics because they are the critical points of the corresponding normalized functionals.

\begin{definition}
The metric $\ell\in\mathfrak{met}\left(  M,\mathcal{T}\right)  $ is an
\emph{$\mathcal{L}$-Einstein metric} if there exists $\lambda_{\mathcal{L}}%
\in\mathbb{R}$ such that for all $e\in E$,
\[
K_{e}=\lambda_{\mathcal{L}}\ell_{e}.
\]
Here $\lambda_{\mathcal{L}}=\mathcal{LEHR}$. The metric $\ell\in
\mathfrak{met}\left(  M,\mathcal{T}\right)  $ is a \emph{$\mathcal{V}%
$-Einstein metric} if there exists $\lambda_{\mathcal{V}}\in\mathbb{R}$ such
that for all $e\in E$,
\[
K_{e}=\lambda_{\mathcal{V}}V_{e}.
\]
Here $\lambda_{\mathcal{V}}=\frac{\mathcal{EHR}}{3\mathcal{V}}$, and
$V_{e}=\ell_{e}\frac{\partial\mathcal{V}}{\partial\ell_{e}}.$
\end{definition}

\begin{remark}
Note that $\mathcal{L}$-Einstein metrics satisfy
\[
\frac{\partial\mathcal{EHR}}{\partial\ell_{e}}=\lambda_{\mathcal{L}}%
\frac{\partial\mathcal{L}}{\partial\ell_{e}},
\]
and $\mathcal{V}$-Einstein metrics satisfy
\[
\frac{\partial\mathcal{EHR}}{\partial\ell_{e}}=\lambda_{\mathcal{V}}%
\frac{\partial\mathcal{V}}{\partial\ell_{e}}.
\]

\end{remark}

\subsection{Geometry and topology of the double tetrahedron.}

Most of this paper will be concerned with the double tetrahedron.

\begin{definition}
\label{double tetra triang}The \emph{double tetrahedron} $\operatorname{DT}$
is the triangulation of the three-sphere obtained by identifying the
corresponding boundary faces of two disjoint tetrahedra.
\end{definition}

One should consider the double tetrahedron as $\operatorname{DT}=\left(
S^{3},\mathcal{T}\right)  ,$ where $\mathcal{T}$ is the triangulation
described in Definition \ref{double tetra triang}. The double tetrahedron is
given a geometry by specifying the edge lengths (or metric), and these six
edge lengths determine the two Euclidean tetrahedra which make up the
manifold. Note that the two tetrahedra are necessarily congruent, which leads
to the following definition.

\begin{definition}
A single tetrahedron in $\operatorname{DT}$ is called the \emph{generating
tetrahedron}.
\end{definition}

The set of metrics on the double tetrahedron can be described succinctly as follows.

\begin{definition}
\label{met def}The space of metrics on the double tetrahedron is the set:%
\[
\mathfrak{met}\left(  \operatorname{DT}\right)  =\left\{  \vec{\ell}%
\in\mathbb{R}^{6}:\operatorname{CM}_{3}>0\right\}  .
\]

\end{definition}

Definition \ref{met def} gives the same set as Definition \ref{met def gen},
but describe it more explicitly. We will often use the term double tetrahedron
to refer to the double tetrahedron with an arbitrary metric $\ell$. The edge
curvatures of the double tetrahedron can be expressed succinctly as%
\begin{equation}
K_{e}=\left(  2\pi-2\beta_{e}\right)  \ell_{e}, \label{edge curv dt}%
\end{equation}
where $\beta_{e}$ is the dihedral angle in the generating tetrahedron at edge
$e.$ Note the following important property of the double tetrahedron.

\begin{lemma}
\label{edge curv pos lemma}On the double tetrahedron, $K_{e}>0$ for any metric
$\ell.$
\end{lemma}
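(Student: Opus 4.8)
The plan is to reduce everything to the explicit curvature formula \eqref{edge curv dt}, namely $K_e = \left(2\pi - 2\beta_e\right)\ell_e$, and then to bound the single dihedral angle $\beta_e$ of the generating tetrahedron. Since every metric $\ell$ lies in $\mathfrak{met}\left(\operatorname{DT}\right)$, each edge length satisfies $\ell_e > 0$, so the sign of $K_e$ is governed entirely by the factor $2\pi - 2\beta_e$. Thus it suffices to prove that each dihedral angle of a nondegenerate Euclidean tetrahedron satisfies $\beta_e < \pi$.

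First I would fix an edge $e$ of the generating tetrahedron and consider the two triangular faces $F_1$ and $F_2$ that meet along $e$. By definition $\beta_e$ is the (non-reflex) angle between these two faces measured in a plane perpendicular to $e$, so a priori $\beta_e \in \left[0,\pi\right]$. To rule out the endpoints I would invoke nondegeneracy. If $\beta_e = \pi$, then $F_1$ and $F_2$ are coplanar, which forces all four vertices of the tetrahedron to lie in a single plane; similarly $\beta_e = 0$ forces the two faces to coincide. In either case the tetrahedron has zero volume, so $\operatorname{CM}_3 = 0$, contradicting the requirement $\operatorname{CM}_3 > 0$ in Definition \ref{met def}. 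Hence $0 < \beta_e < \pi$ for every edge.

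Combining these observations gives $2\pi - 2\beta_e > 0$, and since $\ell_e > 0$ we conclude $K_e = \left(2\pi - 2\beta_e\right)\ell_e > 0$, as desired.

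The computation here is entirely elementary; the only point requiring care is the strict inequality $\beta_e < \pi$, which is where nondegeneracy must be used. I would expect this to be the main (though modest) obstacle: one must ensure that the degenerate configurations are genuinely excluded by $\operatorname{CM}_3 > 0$ rather than merely by $\operatorname{CM}_3 \geq 0$, so that the inequality is strict and $K_e$ stays positive throughout the open set $\mathfrak{met}\left(\operatorname{DT}\right)$.
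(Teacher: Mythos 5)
Your proposal is correct and follows essentially the same route as the paper, which simply observes that each dihedral angle of a nondegenerate tetrahedron is strictly less than $\pi$ and then applies formula (\ref{edge curv dt}). The extra detail you supply---that the endpoint cases $\beta_e=0$ and $\beta_e=\pi$ are excluded because they force $\operatorname{CM}_3=0$, contradicting the strict inequality in Definition \ref{met def}---is exactly the justification the paper leaves implicit.
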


\begin{proof}
Since in a (nondegenerate) tetrahedron, each dihedral angle is less than
$\pi,$ the lemma follows from formula (\ref{edge curv dt}).
\end{proof}

We will label the vertices $\left\{  1,2,3,4\right\}  $ and edges will be
denoted as $ij,$ where $i,j\in\left\{  1,2,3,4\right\}  .$ For instance, in
regard to the edge $12$ between vertices $1$ and $2,$ we will refer to the
length of the edge as $\ell_{12},$ the dihedral angle at the edge (in the
generating tetrahedron) as $\beta_{12},$ and the edge curvature as $K_{12}.$

\section{Metric variations\label{metric section}}

In this section we will study two normalizations of the Einstein-Hilbert-Regge
functional on the double tetrahedron with the primary goal of finding Einstein
metrics. To do so, we need to define the following subspaces of
$\mathfrak{met}(\operatorname{DT})$.

\begin{definition}
The space of \emph{length normalized edge lengths} on the double tetrahedron
is the set:%
\[
\mathfrak{met}_{\mathcal{L}}\left(  \operatorname{DT}\right)  =\left\{
\vec{\ell}\in\mathfrak{met}\left(  \operatorname{DT}\right)  :\sum
\limits_{\left\{  i,j\right\}  \in E}\ell_{ij}=1\right\}  .
\]

The space of \emph{volume normalized edge lengths} on the double tetrahedron
is the set:%
\[
\mathfrak{met}_{\mathcal{V}}\left(  \operatorname{DT}\right)  =\left\{
\vec{\ell}\in\mathfrak{met}\left(  \operatorname{DT}\right)  :\mathcal{V}%
=1\right\}  .
\]

\end{definition}

The space $\mathfrak{met}$ is defined with an open condition, and hence has
the structure of an open six-dimensional manifold. Note that $1$ is a regular
value of the functions $\mathcal{L}$ and $\mathcal{V}$, and hence
$\mathfrak{met}_{\mathcal{L}}$ and $\mathfrak{met}_{\mathcal{V}}$ have the
structures of smooth five-dimensional submanifolds of $\mathfrak{met}$.

In this section, will analyze the variational properties of the
$\mathcal{LEHR}$ and $\mathcal{VEHR}$ functionals. To this end, we require the
following variational results which follow from the Schl\"{a}fli formula (see,
e.g., \cite{G3}).

\begin{proposition}
\label{length derivative of EHR PROP}For the double tetrahedron
$\operatorname{DT},$%

\begin{subequations}
\begin{align}
&  \frac{\partial\mathcal{EHR}\left(  \operatorname{DT},\ell\right)
}{\partial\ell_{ij}}=2\pi-2\beta_{ij}=\frac{K_{ij}}{\ell_{ij}},\\
&  \frac{\partial\mathcal{LEHR}\left(  \operatorname{DT},\ell\right)
}{\partial\ell_{ij}}=\mathcal{L}^{-1}\left(  \frac{K_{ij}}{\ell_{ij}}%
-\frac{\mathcal{EHR}\left(  M,T,\ell\right)  }{\mathcal{L}\left(
M,\mathcal{T}\ell\right)  }\right)  ,\\
&  \frac{\partial\mathcal{VEHR}\left(  \operatorname{DT},\ell\right)
}{\partial\ell_{ij}}=\mathcal{V}^{-\frac{1}{3}}\left(  \frac{K_{ij}}{\ell
_{ij}}-\frac{\mathcal{EHR}\left(  M,\mathcal{T}\ell\right)  }{3\mathcal{V}%
\left(  M,T,\ell\right)  }\frac{\partial\mathcal{V}}{\partial\ell_{ij}%
}\right)  .
\end{align}

\end{subequations}
\end{proposition}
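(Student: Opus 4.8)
The plan is to establish each of the three derivative formulas in turn, with the first serving as the computational foundation for the other two.

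First I would prove the formula for $\partial\mathcal{EHR}/\partial\ell_{ij}$. Recall that $\mathcal{EHR}=\sum_{e\in E}K_e$, and on the double tetrahedron $K_e=(2\pi-2\beta_e)\ell_e$ by \eqref{edge curv dt}. Writing $\mathcal{EHR}=\sum_e(2\pi-2\beta_e)\ell_e=2\pi\mathcal{L}-2\sum_e\beta_e\ell_e$, the differentiation splits into an easy linear part and the genuinely nontrivial part coming from the variation of the dihedral angles $\beta_e$. The Schl\"afli formula is precisely the tool that controls the latter: for a single Euclidean tetrahedron it asserts that $\sum_e\ell_e\,d\beta_e=0$. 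Since the double tetrahedron is two congruent copies of the generating tetrahedron, the angle-variation contributions cancel when one differentiates $\sum_e\beta_e\ell_e$, leaving only the term in which $\ell_e$ itself is differentiated. This yields $\partial\mathcal{EHR}/\partial\ell_{ij}=2\pi-2\beta_{ij}=K_{ij}/\ell_{ij}$, the first equality being the definition of $\beta_{ij}$ and the second a restatement of \eqref{edge curv dt}.

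With the $\mathcal{EHR}$ derivative in hand, the remaining two formulas are applications of the quotient rule. For $\mathcal{LEHR}=\mathcal{EHR}/\mathcal{L}$ I would compute
\[
\frac{\partial\mathcal{LEHR}}{\partial\ell_{ij}}=\frac{1}{\mathcal{L}}\frac{\partial\mathcal{EHR}}{\partial\ell_{ij}}-\frac{\mathcal{EHR}}{\mathcal{L}^{2}}\frac{\partial\mathcal{L}}{\partial\ell_{ij}},
\]
and then substitute $\partial\mathcal{EHR}/\partial\ell_{ij}=K_{ij}/\ell_{ij}$ together with $\partial\mathcal{L}/\partial\ell_{ij}=1$ (since $\mathcal{L}=\sum_e\ell_e$), after which factoring out $\mathcal{L}^{-1}$ gives the stated expression. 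The $\mathcal{VEHR}=\mathcal{EHR}/\mathcal{V}^{1/3}$ case is analogous: the chain rule applied to $\mathcal{V}^{1/3}$ produces a factor $\frac{1}{3}\mathcal{V}^{-2/3}\,\partial\mathcal{V}/\partial\ell_{ij}$, and collecting terms with an overall factor of $\mathcal{V}^{-1/3}$ yields the third formula.

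The main obstacle is the first step, specifically the correct invocation of the Schl\"afli formula. One must be careful that the formula applies to each generating tetrahedron separately and that the factor of two (from the two congruent copies glued along their common faces, reflected in the $2\beta_e$ of \eqref{edge curv dt}) is tracked consistently; the cancellation of all $d\beta_e$ contributions is what makes the clean identity $\partial\mathcal{EHR}/\partial\ell_{ij}=2\pi-2\beta_{ij}$ possible. Once that is secured, the rest is routine calculus, and I would note that the quotient-rule computations hold on all of $\mathfrak{met}(\operatorname{DT})$, not merely on the normalized submanifolds.
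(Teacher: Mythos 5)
Your proposal is correct and matches the paper's (uncited but intended) argument exactly: the paper derives these formulas from the Schl\"afli formula $\sum_e \ell_e\, d\beta_e = 0$ applied to the generating tetrahedron, which kills the angle-variation terms and leaves $\partial\mathcal{EHR}/\partial\ell_{ij}=2\pi-2\beta_{ij}$, after which the normalized formulas follow by the quotient and chain rules just as you describe.
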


One may guess that the double tetrahedron with all lengths equal is somehow
special. We call this the \emph{equal length metric} and note that it is
unique up to scaling. Our main results in this section are the following:

\begin{theorem}
\label{Combined eigen decomposition THM}

\begin{enumerate}
\item On the double tetrahedron, equal length metrics are Einstein metrics
with respect to both $\mathcal{LEHR}$ and $\mathcal{VEHR}$.

\item The eigenspaces and eigenvalues for the Hessian matrices of
$\mathcal{LEHR}$ and $\mathcal{VEHR}$ at equal length metrics (with edge
lengths $k$) are the following:%
\[
{\footnotesize
\begin{tabular}
[c]{|l|l|l|}\hline
eigenspace & spanning vectors & eigenvalues\\\hline
$V_{\lambda_{1}}$ & \multicolumn{1}{|c|}{$%
\begin{array}
[c]{c}%
\left(  1,0,0,0,0,-1\right) \\
\left(  0,1,0,0,-1,0\right) \\
\left(  0,0,1,-1,0,0\right)
\end{array}
$} & \multicolumn{1}{|r|}{$%
\begin{array}
[c]{l}%
\lambda_{1}^{\mathcal{LEHR}}=\frac{2\sqrt{2}}{9}k^{-2}\approx0.313\cdot
k^{-2},\\
\lambda_{1}^{\mathcal{VEHR}}=2^{\frac{7}{6}}3^{-\frac{2}{3}}k^{-2}\left(
2^{\frac{3}{2}}+9\pi-9\arccos\left(  \frac{1}{3}\right)  \right) \\
\;\;\;\;\;\;\;\;\;\approx21.611\cdot k^{-2},
\end{array}
$}\\\hline
$V_{\lambda_{2}}$ & \multicolumn{1}{|c|}{$%
\begin{array}
[c]{c}%
\left(  0,1,-1,-1,1,0\right) \\
\left(  1,-\frac{1}{2},-\frac{1}{2},-\frac{1}{2},-\frac{1}{2},1\right)
\end{array}
$} & $%
\begin{array}
[c]{c}%
\multicolumn{1}{l}{\lambda_{2}^{\mathcal{LEHR}}=-\frac{2\sqrt{2}}{3}%
k^{-2}\approx-0.94\cdot k^{-2},}\\
\lambda_{2}^{\mathcal{VEHR}}=2^{\frac{7}{6}}3^{\frac{1}{3}}k^{-2}\left(
7\pi-2^{\frac{3}{2}}-7\arccos\left(  \frac{1}{3}\right)  \right) \\
\multicolumn{1}{l}{\;\;\;\;\;\;\;\;\;\approx34.145\cdot k^{-2},}%
\end{array}
$\\\hline
$V_{0}$ & \multicolumn{1}{|c|}{$%
\begin{array}
[c]{c}%
\left(  1,1,1,1,1,1\right)
\end{array}
$} & $%
\begin{array}
[c]{c}%
0.
\end{array}
$\\\hline
\end{tabular}
\ \ \ \ \ \ }%
\]

\item Equal length metrics are the only critical points of the $\mathcal{LEHR}%
$ functional and hence are the only $\mathcal{L}$-Einstein metrics. These
metrics are saddle points.

\item Equal length metrics are local minima of the $\mathcal{VEHR}$ functional.
\end{enumerate}
\end{theorem}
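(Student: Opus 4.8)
The plan is to exploit the full symmetry of the equal length metric throughout. At that metric the generating tetrahedron is regular, so its symmetric group $S_4$ acts by permuting the six edges, and every geometric quantity attached symmetrically to the edges (dihedral angles, the volume derivatives $\partial\mathcal{V}/\partial\ell_e$, and so on) takes a single common value. For part (1) I would first observe that all six dihedral angles equal $\arccos(1/3)$, so by \eqref{edge curv dt} the ratios $K_e/\ell_e = 2\pi - 2\arccos(1/3)$ are all equal; this is precisely the $\mathcal{L}$-Einstein condition with $\lambda_{\mathcal{L}} = \mathcal{LEHR}$. Similarly, since $\partial\mathcal{V}/\partial\ell_e$ is edge-independent at the regular tetrahedron, $V_e = \ell_e\,\partial\mathcal{V}/\partial\ell_e$ is constant in $e$, and $K_e$ is constant in $e$, so $K_e = \lambda_{\mathcal{V}} V_e$ holds, giving the $\mathcal{V}$-Einstein condition.

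For part (2) the key structural point is that the Hessian of any $S_4$-invariant function is $S_4$-equivariant, hence its entries depend only on whether two edges are equal, adjacent (sharing a vertex), or opposite. Writing the $6\times 6$ Hessian as $aI + b\,P_{\mathrm{adj}} + c\,P_{\mathrm{opp}}$, where $P_{\mathrm{adj}}$ and $P_{\mathrm{opp}}$ are the adjacency matrices for edges sharing a vertex and for opposite edges, one finds exactly three eigenvalues: $a+4b+c$ on the trivial summand spanned by $(1,1,1,1,1,1)$, $a-c$ on the three-dimensional space of opposite-edge differences, and $a+c-2b$ on the two-dimensional space of pair-symmetric vectors with vanishing pair sum. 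These three summands are exactly the irreducible constituents of the permutation representation of $S_4$ on the six edges (trivial $\oplus$ two-dimensional $\oplus$ standard), which is why the Hessian is forced to be scalar on each and why the eigenspaces in the table arise; the trivial eigenvalue is $0$ because $\mathcal{LEHR}$ and $\mathcal{VEHR}$ are scale invariant, so the radial direction lies in the kernel. It then remains to compute $a,b,c$. For $\mathcal{LEHR} = \mathcal{EHR}/\mathcal{L}$ the first-order cross terms cancel at a critical point and $\mathcal{L}$ is linear, so $\partial^2\mathcal{L} = 0$ and at the equal length metric the Hessian reduces to $\frac{1}{6k}\,\mathrm{Hess}(\mathcal{EHR})$; since $\partial\mathcal{EHR}/\partial\ell_{ij} = 2\pi - 2\beta_{ij}$ by Proposition \ref{length derivative of EHR PROP}, the entries of $\mathrm{Hess}(\mathcal{EHR})$ are $-2\,\partial\beta_{ij}/\partial\ell_{kl}$, and $a,b,c$ come directly from the dihedral-angle derivatives of the regular tetrahedron. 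For $\mathcal{VEHR} = \mathcal{EHR}\,\mathcal{V}^{-1/3}$ the analogous computation leaves one extra contribution from $\partial^2\mathcal{V}$, so here I also need the first and second derivatives of the volume at the regular tetrahedron; combining these yields the $\pi$- and $\arccos(1/3)$-dependent eigenvalues listed.

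Parts (3) and (4) then follow. For (3), vanishing of $\partial\mathcal{LEHR}/\partial\ell_{ij}$ in Proposition \ref{length derivative of EHR PROP} forces $K_{ij}/\ell_{ij} = 2\pi - 2\beta_{ij}$ to be independent of the edge, that is, all six dihedral angles of the generating tetrahedron are equal. I would then show that such a tetrahedron is regular: the link of each vertex is a spherical triangle whose angles are the three dihedral angles at that vertex, so equal dihedral angles force every vertex link to be an equilateral spherical triangle with the common angle. Since a spherical triangle is determined up to congruence by its angles, all four links are congruent, so all face angles (the side lengths of the links) equal a single value $s$; each triangular face then has all three angles equal to $s$, forcing $3s = \pi$ and hence equilateral faces, i.e. a regular tetrahedron. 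Thus equal length metrics are the only critical points, and the sign pattern $\lambda_1^{\mathcal{LEHR}} > 0 > \lambda_2^{\mathcal{LEHR}}$ from part (2) shows they are saddles. For (4), the tangent space to $\mathfrak{met}_{\mathcal{V}}$ at the equal length metric is $(1,1,1,1,1,1)^\perp = V_{\lambda_1}\oplus V_{\lambda_2}$, on which both $\mathcal{VEHR}$ eigenvalues are positive, so the Hessian is positive definite there and the metric is a local minimum.

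The main obstacle is the explicit evaluation of the geometric derivatives at the regular tetrahedron: the first derivatives $\partial\beta_{ij}/\partial\ell_{kl}$ of the dihedral angles in the three cases same, adjacent, opposite, and the first and second derivatives of the volume. These are elementary in principle but algebraically heavy, and it is exactly these numbers that produce the closed-form eigenvalues; the greatest risk of error lies in sign conventions and in the bookkeeping of the same/adjacent/opposite cases. Everything else—the reduction via $S_4$-symmetry, the eigenvalue formulas $a-c$ and $a+c-2b$, and the link-of-vertex rigidity argument—is structural and essentially calculation-free.
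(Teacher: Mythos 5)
Your proposal is correct and follows essentially the same route as the paper: part (1) from the symmetry of the regular tetrahedron, part (2) by using the edge-permutation symmetry to reduce the Hessian to three isotypic blocks with the scaling direction forced into the kernel (the paper's Lemma \ref{hessian lemma} encodes exactly your $a-c$, $a+c-2b$ eigenvalue formulas via the null-vector relations), part (3) from equal dihedral angles forcing equal face angles via the spherical cosine law, and part (4) from positivity of the nonzero eigenvalues. Like the paper, you defer the explicit evaluation of the dihedral-angle and volume derivatives to a direct calculation, so the proofs match in both structure and level of detail.
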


The proof of Theorem~\ref{Combined eigen decomposition THM}.1 follows directly
from the formulas presented in Proposition~\ref{length derivative of EHR PROP}.

One can compute the eigenvalues and eigenvectors of the Hessian matrix for
both the $\mathcal{LEHR}$ and $\mathcal{VEHR}$ functionals at equal length
metrics to obtain the decomposition presented in
Theorem~\ref{Combined eigen decomposition THM}.2. In fact, we prove a more
general lemma which will be used later.

\begin{lemma}
\label{hessian lemma}Let $\mathcal{F}$ be either $\mathcal{LEHR}$ or
$\mathcal{VEHR}$. For the length variation:%
\begin{align}
\ell\left(  t\right)   &  =\left(  \ell_{12}\left(  t\right)  ,\ell
_{13}\left(  t\right)  ,\ell_{14}\left(  t\right)  ,\ell_{23}\left(  t\right)
,\ell_{24}\left(  t\right)  ,\ell_{34}\left(  t\right)  \right)
,\label{length variation}\\
&  =\left(  t,1,1,1,1,t\right)  ,\nonumber
\end{align}
define the following quantities%
\[%
\begin{array}
[c]{ccc}%
a_{\mathcal{F}}\left(  t\right)  =\left.  \frac{\partial^{2}\mathcal{F}%
}{\partial\ell_{12}\partial\ell_{13}}\right\vert _{\ell\left(  t\right)  }, &
b_{\mathcal{F}}\left(  t\right)  =\left.  \frac{\partial^{2}\mathcal{F}%
}{\partial\ell_{12}^{2}}\right\vert _{\ell\left(  t\right)  }, &
c_{\mathcal{F}}\left(  t\right)  =\left.  \frac{\partial^{2}\mathcal{F}%
}{\partial\ell_{13}\partial\ell_{14}}\right\vert _{\ell\left(  t\right)  },\\
d_{\mathcal{F}}\left(  t\right)  =\left.  \frac{\partial^{2}\mathcal{F}%
}{\partial\ell_{13}^{2}}\right\vert _{\ell\left(  t\right)  }, &
e_{\mathcal{F}}\left(  t\right)  =\left.  \frac{\partial^{2}\mathcal{F}%
}{\partial\ell_{12}\partial\ell_{34}}\right\vert _{\ell\left(  t\right)  }, &
f_{\mathcal{F}}\left(  t\right)  =\left.  \frac{\partial^{2}\mathcal{F}%
}{\partial\ell_{13}\partial\ell_{24}}\right\vert _{\ell\left(  t\right)  }.
\end{array}
\]
Then the eigenspaces and eigenvalues of the Hessian of $\mathcal{F}$ are
\[%
\begin{tabular}
[c]{|l|c|l|}\hline
eigenspace & spanning vectors & eigenvalues\\\hline
$V_{\lambda_{1}}$ & $%
\begin{array}
[c]{c}%
\left(  1,0,0,0,0,-1\right)
\end{array}
$ & \multicolumn{1}{|c|}{$b_{\mathcal{F}}-e_{\mathcal{F}}$}\\\hline
$V_{\lambda_{2}}$ & $%
\begin{array}
[c]{c}%
\left(  0,1,0,0,-1,0\right) \\
\left(  0,0,1,-1,0,0\right)
\end{array}
$ & \multicolumn{1}{|c|}{$d_{\mathcal{F}}-f_{\mathcal{F}}$}\\\hline
$V_{\lambda_{3}}$ & $%
\begin{array}
[c]{c}%
\left(  0,1,-1,-1,1,0\right)
\end{array}
$ & \multicolumn{1}{|c|}{$-4c_{\mathcal{F}}-2ta_{\mathcal{F}}$}\\\hline
$V_{\lambda_{4}}$ & $%
\begin{array}
[c]{c}%
\left(  \frac{1}{t},-\frac{1}{2},-\frac{1}{2},-\frac{1}{2},-\frac{1}{2}%
,\frac{1}{t}\right)
\end{array}
$ & \multicolumn{1}{|c|}{$-\left(  2t+\frac{4}{t}\right)  a_{\mathcal{F}}$%
}\\\hline
$V_{0}$ & $%
\begin{array}
[c]{c}%
\left(  t,1,1,1,1,t\right)
\end{array}
$ & \multicolumn{1}{|c|}{$0$}\\\hline
\end{tabular}
\ \hspace{1mm}.
\]

\end{lemma}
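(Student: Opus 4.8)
The plan is to exploit the large symmetry group of the configuration $\ell(t)=(t,1,1,1,1,t)$ to pin down the Hessian, then diagonalize block by block, using the scale-invariance of $\mathcal{F}$ to handle the degenerate directions. First I would record the symmetry: the metric $\ell(t)$ is preserved by the order-$8$ group of vertex permutations fixing the partition of the vertices into $\{1,2\}$ and $\{3,4\}$ (generated by $(1\,2)$, $(3\,4)$, and $(1\,3)(2\,4)$). Under this group the six edges form two orbits, the ``long'' opposite pair $\{12,34\}$ and the four ``short'' edges $\{13,14,23,24\}$. Since each of $\mathcal{EHR},\mathcal{L},\mathcal{V}$, and hence $\mathcal{F}$, is a symmetric function of the edge lengths, the Hessian $H=\bigl(\partial^{2}\mathcal{F}/\partial\ell_{ij}\partial\ell_{kl}\bigr)$ at $\ell(t)$ is invariant under this group, so its entries depend only on the orbit type of the pair of edges. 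This is exactly the content of the definitions of $a,\dots,f$, and it yields
\[
H=\begin{bmatrix} b & a & a & a & a & e \\ a & d & c & c & f & a \\ a & c & d & f & c & a \\ a & c & f & d & c & a \\ a & f & c & c & d & a \\ e & a & a & a & a & b \end{bmatrix}
\]
in the ordering $(\ell_{12},\ell_{13},\ell_{14},\ell_{23},\ell_{24},\ell_{34})$, where $a$ is a long--short entry, $c$ a short--short adjacent entry, and $f$ a short--short opposite entry. The six listed vectors form a symmetry-adapted basis of $\mathbb{R}^{6}$, so it remains only to check each is an eigenvector and to simplify the eigenvalues.

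Next I would dispatch the three robust eigenspaces by direct multiplication. For $V_{\lambda_{1}}=(1,0,0,0,0,-1)$ each short-edge row has equal long-edge entries $a,\dots,a$ which cancel, while the two long-edge rows give $\pm(b-e)$; hence the eigenvalue $b-e$. For the two vectors spanning $V_{\lambda_{2}}$ the long-edge rows cancel in the same way and the short-edge block returns $d-f$. Finally $V_{\lambda_{3}}=(0,1,-1,-1,1,0)$ lies in the short-edge block and gives $d+f-2c$. None of these three computations uses anything beyond the matrix above.

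The eigenvalues must then be rewritten in the tabulated form, and here scale-invariance enters. Because $\mathcal{F}$ is homogeneous of degree $0$, Euler's identity $\sum_{e}\ell_{e}\,\partial_{e}\mathcal{F}=0$ differentiates to $\sum_{e}\ell_{e}\,\partial_{e}\partial_{e'}\mathcal{F}=-\partial_{e'}\mathcal{F}$. Evaluated at $\ell(t)$ this reads $2ta+(d+2c+f)=-\partial_{13}\mathcal{F}$ for a short edge and $t(b+e)+4a=-\partial_{12}\mathcal{F}$ for a long edge. In the applications $\ell(t)$ is a critical point of $\mathcal{F}$ (e.g.\ the equal-length metric when $t=1$), so both right-hand sides vanish; then $d+2c+f=-2ta$ converts $\lambda_{3}=d+f-2c$ into $-4c-2ta$. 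The same two relations also control the last block: the remaining basis vectors $V_{0}=\ell(t)=t\,(1,0,0,0,0,1)+(0,1,1,1,1,0)$ and $V_{\lambda_{4}}$ span the $H$-invariant plane of symmetric long/short combinations, and the differentiated Euler identity gives precisely $H\,\ell(t)=-\nabla\mathcal{F}(\ell(t))$. At a critical point this is $0$, so $V_{0}$ is the null eigenvector; since $V_{\lambda_{4}}$ is orthogonal to $V_{0}$ inside this plane, it is the second eigenvector, with eigenvalue equal to the trace $(b+e)+(d+2c+f)$ of $H$ on the plane, which the two relations collapse to $-(2t+4/t)a$.

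The main obstacle is the coupled symmetric plane. The first three eigenpairs follow mechanically once the matrix form is known, but the pair $V_{0},V_{\lambda_{4}}$ genuinely requires the degree-zero homogeneity of $\mathcal{F}$: without it the radial direction $\ell(t)$ is only isotropic, satisfying $\ell(t)^{\top}H\,\ell(t)=0$, rather than a true null eigenvector, and the clean forms $-4c-2ta$ and $-(2t+4/t)a$ fail. Thus the crux of the argument is to justify the symmetry-reduced form of $H$ rigorously (verifying the claimed coincidences among the second partials and that $\ell(t)$ is fixed by the group), and then to invoke scale-invariance together with the criticality of the relevant metrics both to locate $\ell(t)$ as the zero eigenvector and to reduce $\lambda_{3}$ and $\lambda_{4}$ to the stated expressions.
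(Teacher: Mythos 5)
Your overall route is the same as the paper's: put the Hessian in the six--parameter symmetric form (the paper simply asserts this matrix, so your symmetry-group justification that only the six values $a_{\mathcal F},\dots,f_{\mathcal F}$ can occur is a correct and welcome elaboration), derive the two linear relations $tb_{\mathcal F}+te_{\mathcal F}+4a_{\mathcal F}=0$ and $d_{\mathcal F}+2c_{\mathcal F}+f_{\mathcal F}+2ta_{\mathcal F}=0$ from the scaling direction, and then verify the listed vectors by direct multiplication. Your observations that the first two eigenrows, and the fact that $(0,1,-1,-1,1,0)$ is an eigenvector with eigenvalue $d_{\mathcal F}+f_{\mathcal F}-2c_{\mathcal F}$, need nothing beyond the matrix form are correct and match what the paper leaves to the reader.

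The gap is in how you obtain the two relations. You correctly note that degree-zero homogeneity gives only $H\ell(t)=-\nabla\mathcal F(\ell(t))$, so that $\ell(t)$ is a genuine null eigenvector precisely when $\ell(t)$ is a critical point of $\mathcal F$, and you then assume criticality "in the applications." But $(t,1,1,1,1,t)$ is a critical point only at $t=1$: by Theorem~\ref{Combined eigen decomposition THM}.3 the equal-length metrics are the only critical points of $\mathcal{LEHR}$, and indeed for $t\neq1$ one has $\beta_{12}\neq\beta_{13}$, so $\partial\mathcal{LEHR}/\partial\ell_{12}$ and $\partial\mathcal{LEHR}/\partial\ell_{13}$ cannot both vanish (similarly for $\mathcal{VEHR}$). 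Hence your argument proves the lemma only at $t=1$, whereas the statement is for all $t$ and is applied at general $t\in[1,\sqrt{2})$ in Proposition~\ref{one parameter family of tetrahedra PROP}. To be fair, you have put your finger on a real defect rather than merely introduced one: the paper's own proof deduces "$\ell(t)$ is in the nullspace of the Hessian" directly from $\mathcal F(c\,\ell(t))=\mathcal F(\ell(t))$, which is exactly the non sequitur you identify --- scale invariance by itself only makes $\ell(t)$ isotropic, $\ell(t)^{\top}H\,\ell(t)=0$. So for $t\neq1$ the two relations, and with them the stated forms of $\lambda_{3}$ and $\lambda_{4}$ and the $V_{0}$ row, do not follow from either your argument or the paper's; your write-up is more honest about what is needed, but as it stands it establishes strictly less than the lemma claims, and you should flag that the missing hypothesis (criticality of $\ell(t)$) actually fails off $t=1$.
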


\begin{proof}
We see that the Hessian matrix is
\[
\left.  Hess\left(  \mathcal{F}\right)  \right\vert _{\ell\left(  t\right)  }=%
\begin{bmatrix}
b_{\mathcal{F}} & a_{\mathcal{F}} & a_{\mathcal{F}} & a_{\mathcal{F}} &
a_{\mathcal{F}} & e_{\mathcal{F}}\\
a_{\mathcal{F}} & d_{\mathcal{F}} & c_{\mathcal{F}} & c_{\mathcal{F}} &
f_{\mathcal{F}} & a_{\mathcal{F}}\\
a_{\mathcal{F}} & c_{\mathcal{F}} & d_{\mathcal{F}} & f_{\mathcal{F}} &
c_{\mathcal{F}} & a_{\mathcal{F}}\\
a_{\mathcal{F}} & c_{\mathcal{F}} & f_{\mathcal{F}} & d_{\mathcal{F}} &
c_{\mathcal{F}} & a_{\mathcal{F}}\\
a_{\mathcal{F}} & f_{\mathcal{F}} & c_{\mathcal{F}} & c_{\mathcal{F}} &
d_{\mathcal{F}} & a_{\mathcal{F}}\\
e_{\mathcal{F}} & a_{\mathcal{F}} & a_{\mathcal{F}} & a_{\mathcal{F}} &
a_{\mathcal{F}} & b_{\mathcal{F}}%
\end{bmatrix}
.
\]
Note that for each choice of $\mathcal{F}$, we have $\mathcal{F}\left(
c\hspace{0.5mm}\ell\left(  t\right)  \right)  =\mathcal{F}\left(  \ell\left(
t\right)  \right)  $ for any scalar $c>0.$ Thus $\ell\left(  t\right)  $ is in
the nullspace of the Hessian of $\mathcal{F}$. This implies the following two
equalities:%
\begin{align*}
4a_{\mathcal{F}}+te_{\mathcal{F}}+tb_{\mathcal{F}}  &  =0,\\
2c_{\mathcal{F}}+d_{\mathcal{F}}+f_{\mathcal{F}}+2ta_{\mathcal{F}}  &  =0.
\end{align*}
One can then check the vectors in the statement of the lemma to confirm that
they are eigenvectors with the corresponding eigenvalues.
\end{proof}

\begin{proof}
[Proof of Theorem~\ref{Combined eigen decomposition THM}.2.]Let $\ell_{k}$
denote the equal length metric with all lengths equal to $k.$With the added
symmetry of this length structure we get the following equalities:%
\[%
\begin{array}
[c]{c}%
a_{\mathcal{F}}=\left.  \frac{\partial^{2}\mathcal{F}}{\partial\ell
_{12}\partial\ell_{13}}\right\vert _{\ell_{k}}=\left.  \frac{\partial
^{2}\mathcal{F}}{\partial\ell_{13}\partial\ell_{14}}\right\vert _{\ell_{k}%
}=c_{\mathcal{F}},\\
b_{\mathcal{F}}=\left.  \frac{\partial^{2}\mathcal{F}}{\partial\ell_{12}^{2}%
}\right\vert _{\ell_{k}}=\left.  \frac{\partial^{2}\mathcal{F}}{\partial
\ell_{13}^{2}}\right\vert _{\ell_{k}}=d_{\mathcal{F}},\\
e_{\mathcal{F}}=\left.  \frac{\partial^{2}\mathcal{F}}{\partial\ell
_{12}\partial\ell_{34}}\right\vert _{\ell_{k}}=\left.  \frac{\partial
^{2}\mathcal{F}}{\partial\ell_{13}\partial\ell_{24}}\right\vert _{\ell_{k}%
}=f_{\mathcal{F}}.
\end{array}
\]
By Lemma \ref{hessian lemma}, the eigenspaces and eigenvalues are:%
\[%
\begin{tabular}
[c]{|l|c|c|}\hline
eigenspace & spanning vectors & eigenvalues\\\hline
$V_{\lambda_{1}}$ & $%
\begin{array}
[c]{c}%
\left(  1,0,0,0,0,-1\right) \\
\left(  0,1,0,0,-1,0\right) \\
\left(  0,0,1,-1,0,0\right)
\end{array}
$ & $b_{\mathcal{F}}-e_{\mathcal{F}},$\\\hline
$V_{\lambda_{2}}$ & $%
\begin{array}
[c]{c}%
\left(  0,1,-1,-1,1,0\right) \\
\left(  1,-\frac{1}{2},-\frac{1}{2},-\frac{1}{2},-\frac{1}{2},1\right)
\end{array}
$ & $-6a_{\mathcal{F}},$\\\hline
$V_{0}$ & $%
\begin{array}
[c]{c}%
\left(  1,1,1,1,1,1\right)
\end{array}
$ & $0.$\\\hline
\end{tabular}
\ \ \ \
\]
A calculation yields the following:%
\[%
\begin{array}
[c]{cl}%
a_{\mathcal{LEHR}}=\frac{\sqrt{2}}{9k^{2}}, & a_{\mathcal{VEHR}}=2^{\frac
{1}{6}}3^{\frac{-2}{3}}k^{-2}\left(  2\sqrt{2}-7\pi+7\operatorname{arccos}%
\left(  \frac{1}{3}\right)  \right)  ,\\
b_{\mathcal{LEHR}}=\frac{-\sqrt{2}}{9k^{2}}, & b_{\mathcal{VEHR}}=2^{\frac
{-5}{6}}3^{\frac{-2}{3}}k^{-2}\left(  -4\sqrt{2}+46\pi-46\operatorname{arccos}%
\left(  \frac{1}{3}\right)  \right)  ,\\
e_{\mathcal{LEHR}}=\frac{-\sqrt{2}}{3k^{2}}, & e_{\mathcal{VEHR}}=2^{\frac
{1}{6}}3^{\frac{-2}{3}}k^{-2}\left(  -6\sqrt{2}+5\pi-5\operatorname{arccos}%
\left(  \frac{1}{3}\right)  \right)  .
\end{array}
\]

\end{proof}

That the equal length metric is a local minimum of $\mathcal{VEHR}$
(Theorem~\ref{Combined eigen decomposition THM}.4) follows as a simple
corollary, since the nonzero eigenvalues of $\mathcal{VEHR}$ are both
positive. Also, note that the eigenspaces of the $\mathcal{LEHR}$ and
$\mathcal{VEHR}$ functionals are the same.

We will now show that, on the double tetrahedron, the only critical points of
the $\mathcal{LEHR}$ functional are the equal length metrics. Thus the
$\mathcal{L}$-Einstein metrics are unique. The fact that these critical points
are saddle points follows directly from the eigenvalues of the Hessian.

\begin{proof}
[Proof of Theorem~\ref{Combined eigen decomposition THM}.3.]By Proposition
\ref{length derivative of EHR PROP}, a critical point of $\mathcal{LEHR}$ on
the double tetrahedron satisfies%
\[
2\pi-2\beta_{ij}=\lambda\text{ for all }\left\{  i,j\right\}  \in E,
\]
where $\lambda=\frac{\mathcal{EHR}}{\mathcal{L}}$. Thus,
\begin{equation}
\beta_{ij}=\frac{2\pi-\lambda}{2}\text{ for all }\left\{  i,j\right\}  \in E.
\label{dihedral equals constant EQ}%
\end{equation}
Equation (\ref{dihedral equals constant EQ}) implies that all dihedral angles
are equal. Since all dihedral angles are equal, the face angles will
necessarily all be equal, since the spherical cosine law shows that the face
angles are determined by the dihedral angles. The faces are thus all
equilateral, and hence the generating tetrahedron has all edge lengths equal.
Therefore, the critical points of $\mathcal{LEHR}$ occur only at equal length metrics.
\end{proof}

\begin{remark}
An immediate consequence of Theorems~\ref{Combined eigen decomposition THM}.2
and \ref{Combined eigen decomposition THM}.3 is that the global extrema of the
$\mathcal{LEHR}$ functional occur on the boundary of $\mathfrak{met}%
_{\mathcal{L}}$, i.e.~on degenerate (zero volume) length structures.
\end{remark}

Despite the fact that the equal length metrics are local minima of the
$\mathcal{VEHR}$ functional, we cannot conclude that they are global minima
since the Hessian of the $\mathcal{VEHR}$ functional is not globally positive semidefinite.

\begin{proposition}
\label{one parameter family of tetrahedra PROP}Consider the\ one-parameter
family of (admissible) length structures given by (\ref{length variation}) for
$t\in\left[  1,\sqrt{2}\right)  $. There is a constant $t_{\ast}$
($\approx1.26836$) such that for $t<t_{\ast},$ the Hessian of $\mathcal{VEHR}$
is positive semidefinite with a one-dimensional nullspace, and for $t>t_{\ast
},$ the Hessian has a mixed signature. For $t=t_{\ast},$ the nullspace is
two-dimensional consisting of a scaling direction and an additional
eigenvector $v=\left(  0,1,-1,-1,1,0\right)  .$
\end{proposition}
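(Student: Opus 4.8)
The plan is to reduce the signature question to the signs of four explicit functions of $t$ and then to track those signs across $[1,\sqrt 2)$. By Lemma~\ref{hessian lemma} applied to $\mathcal{F}=\mathcal{VEHR}$, the Hessian at $\ell(t)=(t,1,1,1,1,t)$ is diagonalized by the five listed eigenvectors, with the scaling direction $(t,1,1,1,1,t)$ always contributing eigenvalue $0$. Hence the signature is governed by the four nonzero eigenvalues
\begin{align*}
\lambda_1(t) &= b_{\mathcal{VEHR}}-e_{\mathcal{VEHR}}, & \lambda_2(t) &= d_{\mathcal{VEHR}}-f_{\mathcal{VEHR}},\\
\lambda_3(t) &= -4c_{\mathcal{VEHR}}-2t\,a_{\mathcal{VEHR}}, & \lambda_4(t) &= -\left(2t+\tfrac{4}{t}\right)a_{\mathcal{VEHR}},
\end{align*}
where $a_{\mathcal{VEHR}},\dots,f_{\mathcal{VEHR}}$ denote the second partials of $\mathcal{VEHR}$ at $\ell(t)$. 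The extra nullspace vector named in the statement, $(0,1,-1,-1,1,0)$, is precisely the eigenvector of $\lambda_3$, so the claim reduces to showing that $\lambda_3$ is the eigenvalue that changes sign while $\lambda_1,\lambda_2,\lambda_4$ remain positive.

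First I would compute the geometry of the family $\ell(t)$ in closed form. Up to the symmetries of the tetrahedron there are only two distinct dihedral angles, $\beta_{12}=\beta_{34}$ at the length-$t$ edges and $\beta_{13}=\beta_{14}=\beta_{23}=\beta_{24}$ at the unit belt edges, together with a single volume $\mathcal{V}(t)$; these are obtained from $\operatorname{CM}_3$ and the cosine laws, and the degeneration at $t=\sqrt 2$ corresponds to $\operatorname{CM}_3\to 0$. Differentiating the formula for $\partial\mathcal{VEHR}/\partial\ell_{ij}$ from Proposition~\ref{length derivative of EHR PROP} expresses the second partials in terms of $\mathcal{V}$, $\mathcal{EHR}$, the first and second derivatives of $\mathcal{V}$, and the dihedral-angle derivatives $\partial\beta_{ij}/\partial\ell_{kl}$, using $\partial^2\mathcal{EHR}/\partial\ell_{kl}\partial\ell_{ij}=-2\,\partial\beta_{ij}/\partial\ell_{kl}$ (which is symmetric in the two index pairs since $\mathcal{EHR}$ is a genuine function). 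Specializing to $\ell(t)$ yields explicit expressions for $a_{\mathcal{VEHR}}(t),\dots,f_{\mathcal{VEHR}}(t)$ and hence for each $\lambda_i(t)$. As a check, at $t=1$ these collapse to the values in Theorem~\ref{Combined eigen decomposition THM}.2, with $\lambda_3(1)=\lambda_4(1)=-6a_{\mathcal{VEHR}}>0$ and $\lambda_1(1)=\lambda_2(1)=b_{\mathcal{VEHR}}-e_{\mathcal{VEHR}}>0$.

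With the eigenvalue functions in hand, the heart of the argument is a sign analysis on $[1,\sqrt 2)$. I would first show $a_{\mathcal{VEHR}}(t)<0$ throughout, which forces $\lambda_4(t)=-(2t+4/t)a_{\mathcal{VEHR}}(t)>0$ since $2t+4/t>0$, and similarly establish $\lambda_1(t)>0$ and $\lambda_2(t)>0$ on the whole interval via a positive lower bound or a monotonicity estimate. The remaining eigenvalue $\lambda_3(t)=-4c_{\mathcal{VEHR}}(t)-2t\,a_{\mathcal{VEHR}}(t)$ is positive at $t=1$; I would prove that it is strictly decreasing (or at least crosses zero exactly once) and vanishes at the unique root $t_\ast\approx 1.26836$, becoming negative for $t>t_\ast$. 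Assembling these facts gives the statement: for $t<t_\ast$ all four nonzero eigenvalues are positive, so the Hessian is positive semidefinite with only the one-dimensional scaling nullspace; for $t>t_\ast$ we have $\lambda_3<0$ alongside positive $\lambda_1,\lambda_2,\lambda_4$, hence mixed signature; and at $t=t_\ast$ the vanishing of $\lambda_3$ adjoins its eigenvector $(0,1,-1,-1,1,0)$ to the nullspace, making it two-dimensional.

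The main obstacle is this sign analysis of the transcendental functions $\lambda_i(t)$: establishing that $\lambda_3$ has a single sign change while $\lambda_1,\lambda_2,\lambda_4$ never vanish. Since the dihedral angles and volume involve $\arccos$ and square roots, the quantities $a_{\mathcal{VEHR}},\dots,f_{\mathcal{VEHR}}$ are genuinely transcendental (they carry an explicit factor of $\mathcal{EHR}$), so $t_\ast$ is transcendental and can only be located numerically as the root of $\lambda_3$. Proving the sign statements rigorously will require controlling the $\arccos$ terms and the algebraic factors, e.g.\ by monotonicity of the constituent geometric quantities in $t$ together with explicit bounds; computing the off-diagonal dihedral-angle derivatives that enter $a_{\mathcal{VEHR}},c_{\mathcal{VEHR}},e_{\mathcal{VEHR}},f_{\mathcal{VEHR}}$ is the most laborious ingredient, though routine given the closed-form geometry of $\ell(t)$.
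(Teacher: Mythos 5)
Your proposal follows essentially the same route as the paper: apply Lemma~\ref{hessian lemma} to diagonalize the Hessian along $\ell(t)$, observe that the eigenvalue $\lambda_{3}(t)=-4c_{\mathcal{VEHR}}(t)-2t\,a_{\mathcal{VEHR}}(t)$ attached to $(0,1,-1,-1,1,0)$ is positive at $t=1$ and negative for larger $t$ (the paper checks $t=1.3$), and then locate $t_{\ast}$ by continuity and Newton's method. If anything you are more careful than the paper's own proof, which does not explicitly verify that the remaining eigenvalues $\lambda_{1},\lambda_{2},\lambda_{4}$ stay positive on all of $\left[1,\sqrt{2}\right)$ or that $\lambda_{3}$ crosses zero only once, both of which are needed for the full ``positive semidefinite with one-dimensional nullspace'' claim.
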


\begin{proof}
Using the computation of the eigenvalues in Lemma \ref{hessian lemma} and
Theorem~\ref{Combined eigen decomposition THM}.2, one sees that the eigenvalue
associated to the eigenvector $v=\left(  0,1,-1,-1,1,0\right)  $ can be
continuously expressed as%
\[
\lambda_{v}^{\mathcal{VEHR}}\left(  t\right)  =-4c_{\mathcal{F}}\left(
t\right)  -2ta_{\mathcal{F}}\left(  t\right)  ,
\]
with the following value at $t=1$:%
\begin{align*}
\lambda_{v}^{\mathcal{VEHR}}(1)  &  =2^{\frac{7}{6}}3^{\frac{1}{3}}\left(
7\pi-2^{\frac{3}{2}}-7\arccos\left(  \frac{1}{3}\right)  \right)  ,\\
&  \approx34.145.
\end{align*}

However,
\[
\lambda_{v}^{\mathcal{VEHR}}\left(  1.3\right)  \approx-5.97897,
\]
thus by continuity there exists a value $t_{\ast}$ such that $\lambda
_{v}^{\mathcal{VEHR}}\left(  t_{\ast}\right)  =0$. Using Newton's method
$t_{\ast}$ can be approximated as%
\[
t_{\ast}\approx1.26836.
\]

\end{proof}

In addition, one might ask if the functional $\mathcal{VEHR}$ stays bounded on
$\mathfrak{met}_{\mathcal{V}}.$ It does not.

\begin{proposition}
\label{NEHR unbounded PROP}The $\mathcal{VEHR}$ functional is unbounded on the
double tetrahedron.
\end{proposition}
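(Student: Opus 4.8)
The plan is to exploit the scale invariance of $\mathcal{VEHR}=\mathcal{EHR}/\mathcal{V}^{1/3}$ together with a degenerating family of metrics along which the volume collapses but the total curvature does not. A ready-made candidate is the path (\ref{length variation}), $\ell(t)=(t,1,1,1,1,t)$, pushed to the far end of its admissible range $t\to\sqrt{2}^{-}$ (cf.\ Proposition~\ref{one parameter family of tetrahedra PROP}), where it limits onto the boundary of $\mathfrak{met}(\operatorname{DT})$. Because $\mathcal{VEHR}(\ell(t))=\mathcal{EHR}(\ell(t))/\mathcal{V}(\ell(t))^{1/3}$, it is enough to show that $\mathcal{V}(\ell(t))\to 0$ while $\mathcal{EHR}(\ell(t))$ stays bounded below by a fixed positive constant; unboundedness then follows immediately.

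First I would verify the collapse of the volume. At $t=\sqrt{2}$ the four vertices become coplanar: they are the corners of a square whose diagonals are the edges $12$ and $34$ (of length $\sqrt{2}$) and whose four sides are the unit edges $13,14,23,24$, so $\operatorname{CM}_{3}=0$ there. Since $\operatorname{CM}_{3}$ is a polynomial in $\vec{\ell}$ and the volume of each generating tetrahedron is $\sqrt{\operatorname{CM}_{3}/288}$, continuity gives $\mathcal{V}(\ell(t))\to 0$, hence $\mathcal{V}(\ell(t))^{1/3}\to 0$. Next I would bound the numerator below. Every edge length along the family is at least $1$, and by Lemma~\ref{edge curv pos lemma} each $K_{e}=(2\pi-2\beta_{e})\ell_{e}>0$; in fact I only need a single edge whose dihedral angle stays away from $\pi$. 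Tracking the flattening, the dihedral angles along the four unit (side) edges tend to $0$, while those along the diagonals $12,34$ tend to $\pi$, so that $\mathcal{EHR}(\ell(t))\to 8\pi$. In particular $\mathcal{EHR}(\ell(t))$ is bounded below by a positive constant near $t=\sqrt{2}$, and combining this with $\mathcal{V}(\ell(t))^{1/3}\to 0$ yields $\mathcal{VEHR}(\ell(t))\to+\infty$.

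The one genuine obstacle is the lower bound on $\mathcal{EHR}$: since the volume vanishes, one must make sure the numerator does not vanish with it, i.e.\ rule out an indeterminate $0/0$ competition in which all six dihedral angles tend to $\pi$. Establishing that some dihedral angle instead tends to $0$ in the flat limit is the crux, and it is precisely what the square-degeneration picture provides. A slicker alternative that avoids computing any limiting angle is the elementary estimate $\mathcal{EHR}(\ell(t))=\sum_{e}(2\pi-2\beta_{e})\ell_{e}\ge \sum_{e}(2\pi-2\beta_{e})=12\pi-2\sum_{e}\beta_{e}$, valid because $\ell_{e}\ge 1$, together with the classical fact that the six dihedral angles of a nondegenerate tetrahedron satisfy $\sum_{e}\beta_{e}<3\pi$; this gives the uniform bound $\mathcal{EHR}(\ell(t))>6\pi$ directly. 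Either route delivers the required positive lower bound and completes the proof.
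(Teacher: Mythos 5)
Your proof is correct and follows essentially the same route as the paper, which also establishes unboundedness by sending $t\to\sqrt{2}^{-}$ along the family $\ell(t)=(t,1,1,1,1,t)$, observing that the volume collapses while the diagonal dihedral angles tend to $\pi$ and the side ones to $0$, so that $\mathcal{EHR}\to 8\pi$ (you correctly identify the diagonals as $12$ and $34$, where the paper has a typo). Your alternative lower bound $\mathcal{EHR}>6\pi$ via $\sum_{e}\beta_{e}<3\pi$ is a nice optional shortcut but does not change the substance of the argument.
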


\begin{proof}
This follows from the more general Proposition \ref{VEHR unbdd in const curv}.
\end{proof}

\section{Conformal variations\label{conformal section}}

In this section, we describe the behavior of the Einstein-Hilbert-Regge
functional within a conformal class. We will describe the general set-up, and
then we will specialize to the setting of the double tetrahedron.

\subsection{Introduction to discrete conformal structures}

We will consider a certain conformal structure that has been studied in
\cite{G3}, \cite{Luo1}, \cite{RW}. For the following, let $V^{\ast}$ denote
the real-valued functions on the vertices.

\begin{definition}
Let $\{L_{e}\}_{e\in E}$ be such that $(M,\mathcal{T},L)$ is a piecewise flat
manifold. Let $U\subset V^{\ast}$ be an open set. A \emph{conformal structure}
is a map $U\rightarrow\mathfrak{met}\left(  M,\mathcal{T}\right)  $ determined
by
\begin{equation}
\ell_{e}\left(  f\right)  =\exp\left[  \frac{1}{2}\left(  f_{v}+f_{v^{\prime}%
}\right)  \right]  L_{e}, \label{pb conf}%
\end{equation}
where $e$ is the edge between $v$ and $v^{\prime}.$ The \emph{conformal class}
is the image of $U$ in $\mathfrak{met}\left(  M,\mathcal{T}\right)  ,$ and it
is entirely determined by $L.$ A \emph{conformal variation} $f\left(
t\right)  $ is a smooth curve $\left(  -\varepsilon,\varepsilon\right)
\rightarrow U$ for small $\varepsilon>0$, and it induces a \emph{conformal
variation of metrics} $\ell\left(  f\left(  t\right)  \right)  .$
\end{definition}

\begin{remark}
There is a more general notion of conformal structure on piecewise flat
manifolds that is described in \cite{G3}. The conformal structure described
here is called the perpendicular bisector conformal structure in that paper.
\end{remark}

A useful fact about the conformal structure is that the length cross ratio is
a conformal invariant.

\begin{proposition}
[\cite{SSP}]\label{conformal invariant} For a fixed conformal class and for
any tetrahedron $\{1234\}$ in the triangulation, there exist constants
$c_{13}$ and $c_{14}$ depending only on the conformal class such that
\begin{align*}
\frac{\ell_{12}\ell_{34}}{\ell_{14}\ell_{23}}  &  =c_{13},\\
\frac{\ell_{12}\ell_{34}}{\ell_{13}\ell_{24}}  &  =c_{14}.
\end{align*}
In particular, $c_{13}=\frac{L_{12}L_{34}}{L_{14}L_{23}}$, and $c_{14}%
=\frac{L_{12}L_{34}}{L_{13}L_{24}}.$
\end{proposition}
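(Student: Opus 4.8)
The statement is an immediate computation from the definition (\ref{pb conf}) of the conformal structure, so the plan is simply to substitute and track the conformal factors. First I would write out each of the four edge lengths appearing in the first ratio using (\ref{pb conf}): for the edge $ij$ between vertices $i$ and $j$ we have $\ell_{ij}(f) = \exp\left[\frac{1}{2}(f_i + f_j)\right] L_{ij}$. Substituting these into $\frac{\ell_{12}\ell_{34}}{\ell_{14}\ell_{23}}$, the numerator carries the factor $\exp\left[\frac{1}{2}(f_1+f_2+f_3+f_4)\right]$ and the denominator carries the factor $\exp\left[\frac{1}{2}(f_1+f_4+f_2+f_3)\right]$; these are equal and cancel, leaving $\frac{L_{12}L_{34}}{L_{14}L_{23}}$, which depends only on the background lengths $L$ and hence only on the conformal class. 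Setting $c_{13} = \frac{L_{12}L_{34}}{L_{14}L_{23}}$ gives the first identity.

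The second identity is proved verbatim: substituting (\ref{pb conf}) into $\frac{\ell_{12}\ell_{34}}{\ell_{13}\ell_{24}}$, both numerator and denominator again accumulate the factor $\exp\left[\frac{1}{2}(f_1+f_2+f_3+f_4)\right]$, so the conformal factor cancels and the ratio reduces to $c_{14} = \frac{L_{12}L_{34}}{L_{13}L_{24}}$.

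The conceptual reason the cancellation occurs --- and the only point that requires any care --- is the bookkeeping of vertex indices. In each ratio, the pair of edges in the numerator and the pair of edges in the denominator each form a \emph{perfect matching} on the four vertices $\{1,2,3,4\}$: every vertex appears exactly once among the two numerator edges and exactly once among the two denominator edges. Since under (\ref{pb conf}) each vertex $v$ contributes the same factor $\exp\left[\frac{1}{2}f_v\right]$ to every incident edge, the total conformal factor over any perfect matching is $\exp\left[\frac{1}{2}\sum_{v} f_v\right]$, independent of which matching is chosen. Thus the quotient of the two matching products is automatically free of $f$. There is essentially no analytic obstacle here; the substance of the proposition lies in recognizing that the length cross ratios are precisely such matching quotients, so that the conformal invariance is forced by the multiplicative structure of (\ref{pb conf}).
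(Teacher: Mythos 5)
Your proof is correct and follows essentially the same route as the paper's: substitute the definition of the conformal structure into the cross ratio and observe that the conformal factors cancel, leaving a quantity depending only on $L$. The perfect-matching remark is a nice conceptual gloss but does not change the argument.
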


\begin{proof}
If one considers the length cross ratio, $\frac{\ell_{12}\ell_{34}}{\ell
_{14}\ell_{23}}$, one can easily see that the conformal factors cancel, and we
are left with $\frac{L_{12}L_{34}}{L_{14}L_{23}}$ which depends only on the
choice of conformal class. The same idea works for the other cross ratio.
\end{proof}

The conformal structure gives rise to certain geometric structures within each
tetrahedron (or, more precisely, within the tangent space of each
tetrahedron). Each face $f$ has a circumcenter $c_{f}$ within its tangent
plane. Each tetrahedron $t$ has a circumcenter $c_{t}$. The segment from
$c_{t}$ to $c_{f}$ (in the tangent space of $t$) is orthogonal to the tangent
plane of the face $f,$ and this segment has a signed height $h_{f}.$ Within
the tangent plane to each face, the segment from $c_{f}$ to the midpoint of
one of its edges is orthogonal to that edge, and it has a signed height
$h_{e}.$ Both $h_{f}$ and $h_{e}$ can be computed explicitly (see \cite{G1}).

We can use the results from \S 3 to show the following theorem.

\begin{theorem}
At an equal length metric, both the $\mathcal{LEHR}$ and $\mathcal{VEHR}$
functionals are convex within a conformal class.
\end{theorem}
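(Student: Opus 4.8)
The plan is to compute the Hessian of $\mathcal{F}$ (standing for either $\mathcal{LEHR}$ or $\mathcal{VEHR}$) restricted to a conformal class, written in the conformal coordinate $f\in V^{\ast}\cong\mathbb{R}^{4}$, and to show it is positive semidefinite at the equal length metric $\ell_{k}$. The first reduction is to observe that $\ell_{k}$ is a critical point of $\mathcal{F}$ on all of $\mathfrak{met}(\operatorname{DT})$; this is the content of Theorem~\ref{Combined eigen decomposition THM}.1, and it can be read off directly from Proposition~\ref{length derivative of EHR PROP} since at $\ell_{k}$ all the ratios $K_{ij}/\ell_{ij}$ coincide. Setting $g(f)=\mathcal{F}(\ell(f))$ for the conformal variation \eqref{pb conf} and applying the chain rule, the Hessian of $g$ at $f=0$ is $Hess(g)=J^{T}\,Hess(\mathcal{F})\,J+\sum_{e}\frac{\partial\mathcal{F}}{\partial\ell_{e}}\,Hess_{f}(\ell_{e})$, where $J$ is the Jacobian $J_{ev}=\partial\ell_{e}/\partial f_{v}$. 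Because $\ell_{k}$ is critical, every $\partial\mathcal{F}/\partial\ell_{e}$ vanishes, so the second sum drops out and $Hess(g)=J^{T}\,Hess(\mathcal{F})\,J$. Thus the problem reduces to understanding the pullback of the length Hessian already diagonalized in Theorem~\ref{Combined eigen decomposition THM}.2.

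Next I would compute $J$ at $\ell_{k}$. Differentiating \eqref{pb conf} gives $\partial\ell_{e}/\partial f_{v}=\tfrac{1}{2}\ell_{e}$ when $v$ is an endpoint of $e$ and $0$ otherwise, so at the equal length metric $J=\tfrac{k}{2}B$, where $B$ is the $6\times4$ vertex–edge incidence matrix of the generating tetrahedron (rows indexed by the edges $12,13,14,23,24,34$, with $B_{ev}=1$ exactly when $v$ lies on $e$). Hence $Hess(g)=\tfrac{k^{2}}{4}\,B^{T}\,Hess(\mathcal{F})\,B$, and it suffices to prove $B^{T}\,Hess(\mathcal{F})\,B\succeq0$.

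The crux is a single orthogonality computation. A direct check shows that the two spanning vectors of $V_{\lambda_{2}}$ from Theorem~\ref{Combined eigen decomposition THM}.2, namely $(0,1,-1,-1,1,0)$ and $(1,-\tfrac{1}{2},-\tfrac{1}{2},-\tfrac{1}{2},-\tfrac{1}{2},1)$, both lie in $\ker B^{T}$, as one verifies by pairing them against the four rows of $B^{T}$. Since $\dim\ker B^{T}=6-\operatorname{rank}(B)=2$ and $V_{\lambda_{2}}$ is two-dimensional, in fact $V_{\lambda_{2}}=\ker B^{T}=(\operatorname{Image}B)^{\perp}$, so equivalently the tangent space to the conformal class at $\ell_{k}$ is exactly $\operatorname{Image}B=V_{\lambda_{1}}\oplus V_{0}$. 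This is the geometric heart of the statement: the negative eigendirection of the $\mathcal{LEHR}$ Hessian (the eigenspace $V_{\lambda_{2}}$, on which $\lambda_{2}^{\mathcal{LEHR}}<0$) is never tangent to a conformal class.

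Finally, for any $f$ the vector $Bf$ lies in $V_{\lambda_{1}}\oplus V_{0}$, on which $Hess(\mathcal{F})$ acts with eigenvalues $\lambda_{1}>0$ (positive for both functionals by Theorem~\ref{Combined eigen decomposition THM}.2) and $0$. Therefore $f^{T}B^{T}\,Hess(\mathcal{F})\,Bf=\lambda_{1}\,\lVert P\,Bf\rVert^{2}\geq0$, where $P$ is orthogonal projection onto $V_{\lambda_{1}}$, which gives convexity. The only null direction is the scaling direction $f=(1,1,1,1)$, which maps under $B$ to $2\cdot(1,1,1,1,1,1)\in V_{0}$; modulo scaling the restricted Hessian is positive definite, as must be the case by scale invariance of $\mathcal{F}$. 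The main obstacle is entirely the orthogonality verification $V_{\lambda_{2}}\subseteq\ker B^{T}$; once this and Theorems~\ref{Combined eigen decomposition THM}.1 and \ref{Combined eigen decomposition THM}.2 are in hand, the rest is bookkeeping.
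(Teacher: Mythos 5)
Your proposal is correct and follows essentially the same route as the paper: the paper's proof likewise observes that the conformal directions at the equal length metric are spanned by the incidence vectors $(1,1,1,0,0,0)$, $(1,0,0,1,1,0)$, $(0,1,0,1,0,1)$, $(0,0,1,0,1,1)$ and verifies that these lie in the span of the nonnegative eigenspaces of the Hessian from Theorem~\ref{Combined eigen decomposition THM}, handling $\mathcal{VEHR}$ trivially since the equal length metric is a local minimum in all of $\mathfrak{met}$. You merely make explicit two points the paper leaves implicit — the chain-rule reduction $Hess(g)=J^{T}\,Hess(\mathcal{F})\,J$ valid because $\ell_{k}$ is a critical point, and the orthogonality check $V_{\lambda_{2}}=\ker B^{T}$ — both of which are correct.
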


\begin{proof}
We need only show that the conformal variations at an equal length metric lie
in the nonnegative eigenspaces. Note that the conformal directions are spanned
by
\[
\frac{\partial}{\partial f_{1}}=\frac{\ell_{12}}{2}\frac{\partial}%
{\partial\ell_{12}}+\frac{\ell_{13}}{2}\frac{\partial}{\partial\ell_{13}%
}+\frac{\ell_{14}}{2}\frac{\partial}{\partial\ell_{14}},
\]
etc. At equal length metrics, this corresponds to vectors $\left(
1,1,1,0,0,0\right)  ,$ $\left(  1,0,0,1,1,0\right)  ,$ $\left(
0,1,0,1,0,1\right)  ,$ and $\left(  0,0,1,0,1,1\right)  $ in the notation of
Theorem \ref{Combined eigen decomposition THM}. One can verify that the
conformal variations lie in the span of the nonnegative eigenspaces of the
Hessian of $\mathcal{LEHR}$. The argument for $\mathcal{VEHR}$ is trivial
since the equal length metrics are local minima in the space of all metrics.
\end{proof}

\subsection{Constant Scalar Curvature Metrics\label{csc section}}

In this section, we consider the constant scalar curvature metrics (Definition
\ref{csc def}) on the double tetrahedron. We show existence of constant scalar
curvature metrics in each conformal class, and we also find interesting
geometric examples of these metrics. Additionally, we will show that constant
scalar curvature metrics in a given conformal class are not necessarily unique.

The main results in this section are the following.

\begin{theorem}
\label{csc main thm}

\begin{enumerate}
\item An equihedral metric on the double tetrahedron has constant scalar
curvature with respect to the $\mathcal{LEHR}$ functional.

\item An equihedral metric on the double tetrahedron has constant scalar
curvature with respect to the $\mathcal{VEHR}$ functional.

\item There is a unique equihedral metric up to scaling in every conformal
class on the double tetrahedron.

\item With respect to the $\mathcal{LEHR}$ functional, constant scalar
curvature metrics are not necessarily unique in a conformal class on the
double tetrahedron.
\end{enumerate}
\end{theorem}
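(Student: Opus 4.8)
The theorem has four parts, and I need a strategy for each. Before anything, I need to unpack "equihedral," which the excerpt introduces only implicitly: an equihedral metric should be one whose generating tetrahedron is isosceles/equihedral in the sense that all four faces are congruent triangles, equivalently opposite edges are equal ($\ell_{12}=\ell_{34}$, $\ell_{13}=\ell_{24}$, $\ell_{14}=\ell_{23}$). This is exactly the symmetry appearing in the length variation (\ref{length variation}) and in Lemma~\ref{hessian lemma}, so I expect the whole section to exploit that. Let me sketch each part.

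\textbf{Parts 1 and 2 (equihedral metrics are constant scalar curvature).} Recall that constant scalar curvature is the conformal analogue of the Einstein condition: $\ell$ has constant scalar curvature with respect to $\mathcal{F}$ if it is a critical point of $\mathcal{F}$ restricted to its conformal class. By the first variation formulas in Proposition~\ref{length derivative of EHR PROP}, combined with the conformal directions $\partial/\partial f_v = \sum_{e\ni v}\frac{\ell_e}{2}\partial/\partial\ell_e$ written down in the proof of the preceding theorem, criticality amounts to $\sum_{e\ni v}\ell_e\,\partial\mathcal{F}/\partial\ell_e = 0$ for every vertex $v$. The plan is to show that at an equihedral metric this vanishes by symmetry. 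The key observation is that an equihedral tetrahedron has a vertex-transitive symmetry group (the faces are all congruent), so every vertex sees the same three dihedral angles up to relabeling; hence the vertex-sum of $\frac{\ell_e}{2}(\frac{K_e}{\ell_e}-\text{correction})$ is the same for all four vertices and the constraint that fixes $\mathcal{L}$ (resp.\ $\mathcal{V}$) forces it to be zero. First I would make the equihedral symmetry explicit, then verify the four vertex equations collapse to a single identity that holds automatically.

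\textbf{Part 3 (existence and uniqueness of an equihedral metric in each conformal class).} Here I would use Proposition~\ref{conformal invariant}: the conformal class is pinned down by the two cross ratios $c_{13},c_{14}$. An equihedral metric has $\ell_{12}=\ell_{34}$, $\ell_{13}=\ell_{24}$, $\ell_{14}=\ell_{23}$, which forces $c_{13}=\frac{\ell_{12}\ell_{34}}{\ell_{14}\ell_{23}}=\frac{\ell_{12}^2}{\ell_{14}^2}$ and $c_{14}=\frac{\ell_{12}^2}{\ell_{13}^2}$, both automatically positive. The plan is to run this backwards: given a conformal class with invariants $c_{13},c_{14}$, solve the conformal equations (\ref{pb conf}) for a conformal factor $f$ that produces opposite-edge equality. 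Since scaling is a free direction, I expect to reduce to finding $f$ (up to the global additive constant) satisfying three ratio conditions among six edges; the four vertex degrees of freedom minus one for scaling give three, matching the three ratio constraints, so I would set up the resulting transcendental/algebraic system and argue it has a unique solution, checking the resulting lengths lie in $\mathfrak{met}(\operatorname{DT})$ (i.e.\ $\operatorname{CM}_3>0$).

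\textbf{Part 4 (nonuniqueness of constant scalar curvature metrics).} This is where I expect the real obstacle, and it calls for an explicit construction rather than a structural argument. The natural idea is to exhibit a single conformal class containing the equihedral metric plus another, genuinely different, constant scalar curvature metric. I would look inside the symmetric family (\ref{length variation}) $\ell(t)=(t,1,1,1,1,t)$, which stays equihedral-type and whose Hessian behavior is already analyzed in Proposition~\ref{one parameter family of tetrahedra PROP}; the sign change of the eigenvalue $\lambda_v^{\mathcal{VEHR}}$ at $t_\ast\approx1.26836$ strongly suggests a bifurcation of critical points near the equal-length metric. Concretely, I would fix the conformal class of the equal-length metric and search within it for a second critical point of $\mathcal{LEHR}$ restricted to that class, using the $\mathcal{LEHR}$ first-variation formula to reduce the problem to an explicit equation in one or two symmetry-reduced variables, then invoke the intermediate value theorem (or a direct numerical root, as in the $t_\ast$ computation) to produce a distinct solution. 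The hard part will be confirming that the second solution genuinely lies in the \emph{same} conformal class as the equihedral one while being geometrically distinct from it; establishing this requires carefully tracking the cross-ratio invariants of Proposition~\ref{conformal invariant} along the candidate family and checking they are preserved, which I anticipate is the crux of the whole theorem.
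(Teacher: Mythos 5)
Your treatment of parts 1--3 is essentially the paper's. For parts 1 and 2 the paper makes your symmetry argument explicit: opposite edges and opposite dihedral angles of an equihedral tetrahedron are equal, so all four vertex curvatures $K_{v}$ coincide (hence each equals $\tfrac14\mathcal{EHR}$), and all $L_{v}$ (Lemma \ref{l sums equal}) resp.\ all $V_{v}$ (Lemma \ref{equihedral h and V equal}, which uses the incenter-equals-circumcenter characterization to get $h_{f}$ constant) coincide; the constant scalar curvature identities $K_{v}=\lambda_{\mathcal{L}}L_{v}$ and $K_{v}=\lambda_{\mathcal{V}}V_{v}$ follow by dividing. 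Your observation that the four equal vertex derivatives must sum to zero by scale invariance is an equivalent way to finish. For part 3 the paper does exactly what you propose: it solves $\ell_{ij}=\ell_{k\ell}$, etc., for the conformal factors using the cross-ratio invariants of Proposition \ref{conformal invariant}, obtaining explicit closed-form expressions for $e^{f_{i}/2}$ in terms of $e^{f_{\ell}/2}$, hence uniqueness up to scaling. (Neither you nor the paper verifies that the resulting lengths satisfy $\operatorname{CM}_{3}>0$; your instinct to check this is sound.)

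Part 4 is where your plan has a genuine flaw. The family $\ell(t)=(t,1,1,1,1,t)$ of (\ref{length variation}) consists entirely of equihedral metrics, and by part 3 each conformal class contains only one equihedral metric up to scaling; indeed the cross ratio $c_{13}=t^{2}$ varies with $t$, so this family is transverse to the conformal classes and can never exhibit two constant scalar curvature metrics in the same class. Moreover, the eigenvalue sign change at $t_{\ast}$ in Proposition \ref{one parameter family of tetrahedra PROP} concerns the eigenvector $(0,1,-1,-1,1,0)$ of the Hessian with respect to \emph{length} variations, and one can check that this vector does not lie in the span of the conformal directions $\partial/\partial f_{v}$ at $\ell(t)$, so it says nothing about criticality or bifurcation within a conformal class. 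Your fallback plan --- fix the conformal class and search for a second critical point using a symmetry-reduced ansatz --- is the right one, but it should be carried out in the conformal parameters $f=(f_{1},f_{2},f_{3},f_{4})$ from the start, which makes your worry about ``tracking the cross-ratio invariants'' moot: membership in the conformal class is automatic. This is what the paper does, exhibiting in the class $L_{e}\equiv 1$ the second (non-equihedral) critical point $F_{b}\approx(-1.233,-1.233,0,0)$ alongside the equal length metric $F_{a}=(0,0,0,0)$; the verification is numerical, not an intermediate value argument.
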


First, we will define a notion of vertex curvature which will lead to a
definition of constant scalar curvature metrics.

\begin{definition}
The \emph{vertex curvature} of a vertex $v$ is%
\begin{equation}
K_{v}=\frac{1}{2}\sum_{e>v}K_{e}, \label{vertex curvature}%
\end{equation}
where the sum is over all edges containing vertex $v.$
\end{definition}

\begin{remark}
$\mathcal{EHR}$ can be written in terms of vertex curvatures as
\[
\mathcal{EHR}\left(  M,\mathcal{T},\ell\right)  =\sum_{v\in V}K_{v}.
\]

\end{remark}

\begin{remark}
As described in \cite{G3}, the formula for vertex curvature depends on the
choice of conformal structure, which comes from the variation formula for
$\mathcal{EHR}$.
\end{remark}

One can immediately see the connection between discrete conformal variations
and those in the smooth category when one considers the following variation formulas.

\begin{lemma}
For a conformal variation $f(t)$ of a three-dimensional, piecewise flat
manifold $(M,\mathcal{T},\ell)$, we have
\begin{subequations}
\label{var of ehr}%
\begin{align}
&  \frac{\partial}{\partial f_{v}}\mathcal{EHR}(M,\mathcal{T},\ell
(f))=K_{v},\\
&  \frac{\partial}{\partial f_{v}}\mathcal{LEHR}(M,\mathcal{T},\ell
(f))=\mathcal{L}^{-1}(K_{v}-(\mathcal{LEHR})L_{v}),\\
&  \frac{\partial}{\partial f_{v}}\mathcal{VEHR}(M,\mathcal{T},\ell
(f))=\mathcal{V}^{-\frac{1}{3}}(K_{v}-\frac{\mathcal{EHR}}{3\mathcal{V}}%
V_{v}),
\end{align}
where $L_{v}=\frac{1}{2}\sum_{e>v}\ell_{e},$ $V_{v}=\frac{1}{3}\sum
_{t>f>v}h_{f<t}A_{f}$, and $A_{f}$ is the area of face $f$.
\end{subequations}
\end{lemma}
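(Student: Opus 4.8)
The plan is to prove the three variation formulas by differentiating through the conformal structure, reducing everything to the edge-length variation formulas of Proposition~\ref{length derivative of EHR PROP} via the chain rule. The key observation is that the conformal structure (\ref{pb conf}) gives $\ell_e(f) = \exp[\frac{1}{2}(f_v + f_{v'})]L_e$, so that
\[
\frac{\partial \ell_e}{\partial f_v} = \frac{1}{2}\ell_e \quad \text{if } v \text{ is an endpoint of } e, \quad \text{and } 0 \text{ otherwise.}
\]
Thus for any functional $\mathcal{F}$ of the edge lengths, the chain rule gives
\[
\frac{\partial \mathcal{F}}{\partial f_v} = \sum_{e \in E} \frac{\partial \mathcal{F}}{\partial \ell_e}\frac{\partial \ell_e}{\partial f_v} = \frac{1}{2}\sum_{e > v} \ell_e \frac{\partial \mathcal{F}}{\partial \ell_e}.
\]

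First I would establish (\ref{var of ehr}a). Applying the display above to $\mathcal{F} = \mathcal{EHR}$ and using Proposition~\ref{length derivative of EHR PROP} (more precisely, the general Schl\"{a}fli-type formula $\partial \mathcal{EHR}/\partial \ell_e = K_e/\ell_e$, valid for a general piecewise flat manifold), we get
\[
\frac{\partial \mathcal{EHR}}{\partial f_v} = \frac{1}{2}\sum_{e>v} \ell_e \cdot \frac{K_e}{\ell_e} = \frac{1}{2}\sum_{e>v} K_e = K_v,
\]
which is exactly the definition (\ref{vertex curvature}) of vertex curvature. This is the cleanest case and confirms that the chosen normalization of $K_v$ is precisely what makes the conformal variation of $\mathcal{EHR}$ come out to $K_v$.

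Next I would handle the two normalized functionals by the quotient rule. Writing $\mathcal{LEHR} = \mathcal{EHR}/\mathcal{L}$, I compute $\frac{\partial}{\partial f_v}\mathcal{LEHR} = \mathcal{L}^{-1}\frac{\partial \mathcal{EHR}}{\partial f_v} - \mathcal{EHR}\,\mathcal{L}^{-2}\frac{\partial \mathcal{L}}{\partial f_v}$. The first term is $\mathcal{L}^{-1}K_v$ by the step above, and for the second I need $\frac{\partial \mathcal{L}}{\partial f_v} = \frac{1}{2}\sum_{e>v}\ell_e \cdot \frac{\partial \mathcal{L}}{\partial \ell_e} = \frac{1}{2}\sum_{e>v}\ell_e$, which is exactly $L_v$; substituting $\mathcal{EHR}/\mathcal{L} = \mathcal{LEHR}$ yields (\ref{var of ehr}b). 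The $\mathcal{VEHR}$ case is analogous, now with $\mathcal{VEHR} = \mathcal{EHR}\cdot\mathcal{V}^{-1/3}$, giving $\frac{\partial}{\partial f_v}\mathcal{VEHR} = \mathcal{V}^{-1/3}\frac{\partial \mathcal{EHR}}{\partial f_v} - \frac{1}{3}\mathcal{EHR}\,\mathcal{V}^{-4/3}\frac{\partial \mathcal{V}}{\partial f_v}$, and I factor out $\mathcal{V}^{-1/3}$ to match the stated form.

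The main obstacle — really the only nontrivial point — is verifying that $\frac{\partial \mathcal{V}}{\partial f_v} = V_v = \frac{1}{3}\sum_{t>f>v} h_{f<t}A_f$. By the chain rule this equals $\frac{1}{2}\sum_{e>v}\ell_e\frac{\partial \mathcal{V}}{\partial \ell_e}$, so I must show this reorganization of the derivative $\sum_e \ell_e \partial \mathcal{V}/\partial \ell_e$ equals the geometric expression in terms of the signed heights $h_{f<t}$ and face areas $A_f$. This is the discrete analogue of the classical fact that the conformal variation of volume decomposes into contributions from the dual-cell heights; it is where the specific geometry of the perpendicular bisector conformal structure enters (the orthogonality of the $c_t$-to-$c_f$ segments), and I would cite the explicit formulas for $h_f$ from \cite{G1} to carry out the identification of $\ell_e\partial\mathcal{V}/\partial\ell_e$ with the dual-volume decomposition $\frac{1}{3}h_{f<t}A_f$ summed appropriately.
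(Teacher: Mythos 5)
Your proposal is correct and is essentially the argument the paper intends: the paper's own proof simply defers to \cite{G3} (``these formulas are mostly in \cite{G3}, and the rest follow easily''), and your chain-rule computation $\frac{\partial\mathcal{F}}{\partial f_v}=\frac{1}{2}\sum_{e>v}\ell_e\frac{\partial\mathcal{F}}{\partial\ell_e}$ combined with the Schl\"{a}fli formula and the quotient rule is exactly the ``easy'' part being alluded to. You also correctly isolate the one genuinely nontrivial ingredient --- the identification $\frac{1}{2}\sum_{e>v}\ell_e\frac{\partial\mathcal{V}}{\partial\ell_e}=\frac{1}{3}\sum_{t>f>v}h_{f<t}A_f$ --- and defer it to \cite{G1}/\cite{G3}, which is the same division of labor the paper makes.
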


\begin{proof}
These formulas are mostly in \cite{G3}, and the rest follow easily.
\end{proof}

\begin{remark}
In the previous formulas, $L_{v}=\frac{\partial\mathcal{L}}{\partial f_{v}}$
and $V_{v}=\frac{\partial\mathcal{V}}{\partial f_{v}}$. Also note that
$\sum_{v\in V}L_{v}=\mathcal{L}$, and $\sum_{v\in V}V_{v}=3\mathcal{V}.$
\end{remark}

These variation formulas motivate the following notion of constant scalar
curvature in the discrete setting.

\begin{definition}
\label{csc def}A three-dimensional piecewise flat manifold $(M,\mathcal{T}%
,\ell)$ has \emph{constant scalar curvature} if it is a critical point of one
of the normalized $\mathcal{EHR}$ functionals with respect to conformal
variations. We say $(M,\mathcal{T},\ell)$ \emph{has constant $\mathcal{L}%
$-scalar curvature $\lambda_{\mathcal{L}}$} if
\begin{equation}
K_{v}=\lambda_{\mathcal{L}}L_{v}, \label{Lcsc eqn}%
\end{equation}
for all $v\in V$. Here $\lambda_{\mathcal{L}}=\mathcal{LEHR}$. We say
$(M,\mathcal{T},\ell)$ has \emph{constant $\mathcal{V}$-scalar curvature
$\lambda_{\mathcal{V}}$} if
\begin{equation}
K_{v}=\lambda_{\mathcal{V}}V_{v}, \label{Vcsc eqn}%
\end{equation}
for all $v\in V$. Here $\lambda_{\mathcal{V}}=\frac{\mathcal{EHR}%
}{3\mathcal{V}}$.
\end{definition}

The following proposition is essentially shown in \cite{G3}.

\begin{proposition}
If $(M,\mathcal{T},\ell)$ is a three-dimensional piecewise flat manifold which
is $\mathcal{L}$-Einstein ($\mathcal{V}$-Einstein), then it has constant
$\mathcal{L}$-scalar curvature ($\mathcal{V}$-scalar curvature).
\end{proposition}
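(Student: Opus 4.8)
The plan is to exploit the fact that both vertex curvature and the vertex normalizing quantities are, by construction, half-sums over incident edges of the corresponding edge quantities. The Einstein conditions are pointwise proportionalities at the level of edges, and summing should propagate them directly to the vertices; no analysis beyond the chain rule and linearity is needed.

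First I would dispatch the $\mathcal{L}$ case, which is essentially immediate. Recall $K_v=\frac{1}{2}\sum_{e>v}K_e$ and $L_v=\frac{1}{2}\sum_{e>v}\ell_e$. If $\ell$ is $\mathcal{L}$-Einstein, then $K_e=\lambda_{\mathcal{L}}\ell_e$ for every edge $e$, with $\lambda_{\mathcal{L}}=\mathcal{LEHR}$. Substituting into the vertex curvature gives
\[
K_v=\frac{1}{2}\sum_{e>v}\lambda_{\mathcal{L}}\ell_e=\lambda_{\mathcal{L}}\,\frac{1}{2}\sum_{e>v}\ell_e=\lambda_{\mathcal{L}}L_v
\]
for each $v$, which is exactly equation (\ref{Lcsc eqn}) with the correct constant.

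For the $\mathcal{V}$ case, the one extra ingredient is a relation between $V_v=\partial\mathcal{V}/\partial f_v$ and the edge-volumes $V_e=\ell_e\,\partial\mathcal{V}/\partial\ell_e$. I would extract this from the chain rule applied to the conformal structure (\ref{pb conf}): since $\partial\ell_e/\partial f_v=\frac{1}{2}\ell_e$ when $v$ is an endpoint of $e$ and vanishes otherwise,
\[
V_v=\frac{\partial\mathcal{V}}{\partial f_v}=\sum_{e>v}\frac{\partial\mathcal{V}}{\partial\ell_e}\cdot\frac{1}{2}\ell_e=\frac{1}{2}\sum_{e>v}V_e.
\]
With this identity the $\mathcal{V}$-Einstein condition $K_e=\lambda_{\mathcal{V}}V_e$, where $\lambda_{\mathcal{V}}=\mathcal{EHR}/(3\mathcal{V})$, substituted into $K_v=\frac{1}{2}\sum_{e>v}K_e$ yields $K_v=\lambda_{\mathcal{V}}\,\frac{1}{2}\sum_{e>v}V_e=\lambda_{\mathcal{V}}V_v$, which is equation (\ref{Vcsc eqn}).

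I do not expect a serious obstacle: the statement is a bookkeeping consequence of the half-sum structure of the vertex quantities. The only point requiring a moment's care is the identity $V_v=\frac{1}{2}\sum_{e>v}V_e$ in the $\mathcal{V}$ case, since the two $V$-quantities are obtained by differentiating $\mathcal{V}$ in two different coordinate systems—the conformal factors $f_v$ versus the edge lengths $\ell_e$—and it is precisely the form of the perpendicular bisector conformal structure (\ref{pb conf}) that makes the two families of derivatives reconcile. In the $\mathcal{L}$ case the analogous identity $L_v=\frac{1}{2}\sum_{e>v}\ell_e$ is a definition, so no such check is needed.
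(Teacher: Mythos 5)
Your argument is correct: the $\mathcal{L}$ case is immediate from $K_v=\frac{1}{2}\sum_{e>v}K_e$ and $L_v=\frac{1}{2}\sum_{e>v}\ell_e$, and your chain-rule identity $V_v=\partial\mathcal{V}/\partial f_v=\frac{1}{2}\sum_{e>v}\ell_e\,\partial\mathcal{V}/\partial\ell_e=\frac{1}{2}\sum_{e>v}V_e$ is exactly the point that makes the $\mathcal{V}$ case go through, with the constants $\lambda_{\mathcal{L}}=\mathcal{LEHR}$ and $\lambda_{\mathcal{V}}=\mathcal{EHR}/(3\mathcal{V})$ matching on both sides. The paper itself gives no proof (it defers to \cite{G3}), and what you have written is the intended bookkeeping argument.
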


Now we will define the notion of an equihedral metric. We will also collect
the necessary pieces to prove that the equihedral metrics have both constant
$\mathcal{L}$-scalar curvature and constant $\mathcal{V}$-scalar curvature.

\begin{definition}
Let $\sigma^{3}$ be a tetrahedron such that all faces are congruent. Then
$\sigma^{3}$ is called an \emph{equihedral tetrahedron}.
\end{definition}

\begin{theorem}
\label{equi def} Let $\sigma^{3}$ be a tetrahedron. One can show that the
following conditions are equivalent:

\begin{enumerate}
\item The tetrahedron $\sigma^{3}$ is equihedral.

\item The opposite edges have equal lengths.

\item The opposite edges have equal dihedral angles.

\item The inscribed sphere touches each face at its circumcenter.

\item The center of the inscribed sphere corresponds to the circumcenter of
the tetrahedron.
\end{enumerate}
\end{theorem}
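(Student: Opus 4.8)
The plan is to take condition~(2) (opposite edges equal) as a hub and prove each of (1), (3), (4), (5) equivalent to it, using the parallelepiped associated to the tetrahedron as the central device. First I would settle $(1)\Leftrightarrow(2)$ by triangle geometry. For $(2)\Rightarrow(1)$, if $\ell_{12}=\ell_{34}$, $\ell_{13}=\ell_{24}$, $\ell_{14}=\ell_{23}$, then all four faces carry the same multiset $\{\ell_{12},\ell_{13},\ell_{14}\}$ of side lengths and so are congruent by SSS. For $(1)\Rightarrow(2)$ I would count incidences: congruent faces share a single side-length multiset $\{a,b,c\}$, and since each edge lies in exactly two faces, each value is realized by exactly two edges; as a single face cannot carry two sides of equal length when $a,b,c$ are distinct, the two edges of a given length cannot be adjacent and hence must be opposite (the isosceles and equilateral subcases are checked directly).

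The remaining conditions I would route through the parallelepiped with vertices $0,\,u+v,\,v+w,\,w+u$ for spanning vectors $u,v,w$: the six tetrahedron edges are the face diagonals, and a pair of opposite edges is precisely the two diagonals $u+v$ and $u-v$ of one parallelogram face. Since $|u+v|=|u-v|\Leftrightarrow u\cdot v=0$, condition~(2) is equivalent to $u,v,w$ being mutually orthogonal, i.e.\ to the parallelepiped being a rectangular box. With the box in hand the forward implications are transparent: the three $180^\circ$ rotations about the box axes are isometries interchanging opposite edges, giving $(2)\Rightarrow(3)$; the box center is equidistant from all vertices (so it is the circumcenter) and, being fixed by the face-transitive group those rotations generate, is equidistant from all faces (so it is the incenter), giving~(5); and since that center is equidistant from the three vertices of each face, the foot of its perpendicular to a face is that face's circumcenter, giving~(4). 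The equivalence $(4)\Leftrightarrow(5)$ I would prove in general by a tangent-length count: if the insphere touches each face at its circumcenter, the tangent length from each vertex equals the circumradius of every adjacent face, forcing all faces to share one circumradius $\rho$ and every vertex to lie at distance $\sqrt{\rho^2+r^2}$ from the incenter, whence the incenter is equidistant from the vertices and coincides with the circumcenter.

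The substance, and the step I expect to be the main obstacle, is the converse passage from angular or center-coincidence data back to the metric equality of~(2). For $(3)\Rightarrow(2)$ I would invoke the rigidity that the dihedral angles of a Euclidean tetrahedron determine it up to similarity (via the Gram matrix $G_{mn}=\hat n_m\cdot\hat n_n$ of the outward unit face normals, whose entries are cosines of the face angles and whose distinguished null vector encodes the face areas): the equalities of opposite dihedral angles force the pattern $G_{12}=G_{34}$, $G_{13}=G_{24}$, $G_{14}=G_{23}$ of an isosceles tetrahedron, and rigidity then makes the tetrahedron itself isosceles. For $(5)\Rightarrow(2)$ I would place the common center $O$ at the origin; for an edge $e$ with midpoint $M$ shared by faces whose circumcenters are $c,c'$, all four points $O,c,c',M$ lie in the perpendicular-bisector plane of $e$, and the two right triangles $OcM$, $Oc'M$ (right angles at $c,c'$, legs $r$) yield $\cot(\beta_e/2)=\sqrt{\rho^2-\ell_e^2/4}\,/\,r$, tying each dihedral angle to its edge length through the global constants $\rho,r$; the remaining task is to show this system forces $u,v,w$ orthogonal, i.e.\ the associated parallelepiped rectangular. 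Recovering that orthogonality---equivalently, extracting edge equality from the angle/center relations---is the delicate point, and $(3)\Rightarrow(2)$ is the implication I expect to require the most care.
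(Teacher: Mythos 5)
First, a caveat on the comparison: the paper does not prove Theorem~\ref{equi def} at all --- it is stated as a classical fact with a pointer to \cite{Ar} --- so there is no argument of the authors' to measure yours against, and you are supplying a proof where the paper supplies none. On its own terms most of your proposal is genuinely complete and correct: the incidence count for $(1)\Leftrightarrow(2)$ (with the isosceles and equilateral subcases handled as you indicate), the reduction of $(2)$ to orthogonality of $u,v,w$ via $|u+v|=|u-v|\Leftrightarrow u\cdot v=0$ (every nondegenerate tetrahedron does embed as alternate vertices of a parallelepiped, since $u,v,w$ are recovered linearly from the vertices), the three half-turns of the rectangular box giving $(2)\Rightarrow(3)$ and $(2)\Rightarrow(5)\Rightarrow(4)$, and the tangent-length argument for $(4)\Rightarrow(5)$ (each vertex's tangent length equals the circumradius of every adjacent face, forcing a common $\rho$ and $|vI|=\sqrt{\rho^{2}+r^{2}}$) all hold up.

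The gaps are exactly where you predicted, and one of them is not closed, so the five-way equivalence is not yet established. For $(3)\Rightarrow(2)$, ``the Gram matrix has the isosceles pattern, and rigidity makes the tetrahedron isosceles'' is not yet an argument: rigidity says a realizable Gram matrix determines the tetrahedron up to similarity, but you must still either exhibit an isosceles tetrahedron realizing the given entries $(G_{12},G_{13},G_{14})$ or argue differently. The clean fix is to observe that the pattern makes $G$ invariant under the relabelings $(12)(34)$ and $(13)(24)$, so the relabeled tetrahedron realizes the same Gram matrix and is similar to the original by a map inducing the corresponding double transposition of vertices; a self-similarity of a bounded set is an isometry, and an isometry carrying edge $13$ to edge $24$, etc., forces opposite edges equal. (Alternatively, argue in the spirit of the paper's proof of Theorem~\ref{Combined eigen decomposition THM}.3: the spherical law of cosines determines the face angles at a vertex from the dihedral angles there; under $(3)$ all four vertex figures carry the same labeled angle data, so all faces are similar, and adjacent faces share an edge opposite equal angles, hence are congruent, giving $(1)$.) More seriously, for $(5)\Rightarrow(2)$ you derive the correct relation $\cot(\beta_{e}/2)=\sqrt{\rho^{2}-\ell_{e}^{2}/4}\,/\,r$ but explicitly leave ``the remaining task'' open; as written this implication is missing, and without it conditions $(4)$ and $(5)$ are only shown to follow from $(2)$, never to imply it. Some additional global input is needed here --- for instance the Minkowski relation $\sum_{f}A_{f}\hat{n}_{f}=0$ combined with the identity $c_{f}=O+r\hat{n}_{f}$ expressing the contact points, or a reduction of $(5)$ to $(3)$ --- and that step, not $(3)\Rightarrow(2)$, is the real obstruction in your outline.
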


In fact, according to \cite{Ar}, there are over 100 equivalent conditions that
characterize equihedral tetrahedra. We note that in the literature, these
tetrahedra are sometimes called \textquotedblleft isosceles
tetrahedra\textquotedblright\ or \textquotedblleft equifacial
tetrahedra.\textquotedblright\ We will say that a piecewise flat metric $\ell$
on the double tetrahedron is \emph{equihedral} if its generating tetrahedron
satisfies any of the equivalent conditions in Theorem \ref{equi def}.

\begin{lemma}
\label{l sums equal} In an equihedral tetrahedron, $L_{v}=L_{v^{\prime}}\ $for
all pairs of vertices $v,v^{\prime}\in V$.
\end{lemma}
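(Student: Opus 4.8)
The plan is to reduce the statement to a short arithmetic computation by invoking the characterization of equihedral tetrahedra from Theorem~\ref{equi def}. Among the equivalent conditions listed there, the one phrased purely in terms of edge lengths---condition (2), that opposite edges have equal lengths---is the natural tool, since $L_v$ is defined in terms of edge lengths alone. Thus I would begin by recording the three equalities
\begin{equation*}
\ell_{12}=\ell_{34},\qquad \ell_{13}=\ell_{24},\qquad \ell_{14}=\ell_{23},
\end{equation*}
which hold in any equihedral tetrahedron by that equivalence.

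Next I would recall the definition $L_v=\frac{1}{2}\sum_{e>v}\ell_e$ and write out the four quantities explicitly. Since the edges meeting at vertex $i$ are exactly the three edges $ij$ with $j\neq i$, we have
\begin{align*}
L_1 &= \tfrac{1}{2}\left(\ell_{12}+\ell_{13}+\ell_{14}\right), &
L_2 &= \tfrac{1}{2}\left(\ell_{12}+\ell_{23}+\ell_{24}\right),\\
L_3 &= \tfrac{1}{2}\left(\ell_{13}+\ell_{23}+\ell_{34}\right), &
L_4 &= \tfrac{1}{2}\left(\ell_{14}+\ell_{24}+\ell_{34}\right).
\end{align*}
Substituting the opposite-edge equalities into each of $L_2$, $L_3$, and $L_4$ rewrites all three in terms of the same triple $\ell_{12},\ell_{13},\ell_{14}$, so that every $L_v$ collapses to $\tfrac{1}{2}\left(\ell_{12}+\ell_{13}+\ell_{14}\right)$. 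This gives $L_v=L_{v'}$ for all pairs of vertices.

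There is essentially no obstacle here: the entire content of the lemma resides in the equivalence between ``equihedral'' and ``opposite edges equal,'' which is supplied for free by Theorem~\ref{equi def}. The only care required is the bookkeeping of which edge is opposite to which, and checking that the three substitutions genuinely carry the edge set incident to each of vertices $2$, $3$, and $4$ onto the edge set incident to vertex~$1$.
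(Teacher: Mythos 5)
Your proof is correct and follows essentially the same route as the paper: both invoke Theorem~\ref{equi def}.2 (opposite edges have equal lengths) and observe that each $L_v$ then collapses to $\tfrac{1}{2}(\ell_{12}+\ell_{13}+\ell_{14})$; you merely write out the bookkeeping for all four vertices where the paper does one case and says the rest are similar.
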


\begin{proof}
Consider, for example, $L_{1}=\frac{1}{2}(\ell_{12}+\ell_{13}+\ell_{14})$.
Using Theorem~\ref{equi def}.2, one sees that this sum is equal to, say,
$\frac{1}{2}(\ell_{21}+\ell_{24}+\ell_{23})=L_{2}$. A similar argument holds
for the other vertices.
\end{proof}

\begin{lemma}
\label{equihedral h and V equal}In an equihedral tetrahedron, the
$h_{f}=h_{f^{\prime}}$ for all faces $f,f^{\prime}\in F$, and hence the
$V_{v}=V_{v^{\prime}}$ for all vertices $v,v^{\prime}\in V$.
\end{lemma}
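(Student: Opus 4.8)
The plan is to prove the claim about $h_f$ and $V_v$ being constant across faces and vertices respectively, using the equihedral symmetry established in Theorem~\ref{equi def}. The statement has two parts connected by ``and hence'': first that all face heights $h_f$ are equal, and second that this forces all vertex volumes $V_v$ to be equal. I would handle these in order, since the second genuinely follows from the first together with the congruence of faces.

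First I would establish $h_f = h_{f'}$ for all faces. The height $h_f$ is the signed distance from the circumcenter $c_t$ of the tetrahedron to the circumcenter $c_f$ of face $f$, measured along the direction perpendicular to the face's tangent plane. By Theorem~\ref{equi def}.4 and \ref{equi def}.5, in an equihedral tetrahedron the incenter coincides with the circumcenter $c_t$, and the inscribed sphere touches each face precisely at its circumcenter $c_f$. This means $c_f$ is the foot of the perpendicular from $c_t$ to face $f$, and the length of this perpendicular segment is exactly the inradius $r$. Since a single inscribed sphere of radius $r$ touches all four faces, each height $h_f$ equals $r$ (with consistent sign, as $c_t$ lies inside the tetrahedron), so $h_f = h_{f'} = r$ for all faces. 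I expect this identification of $h_f$ with the inradius to be the conceptual crux; the force of the argument is entirely carried by the equivalence conditions in Theorem~\ref{equi def}, so it should be clean rather than computational.

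Next I would deduce $V_v = V_{v'}$. Recall from the variation lemma that $V_v = \frac{1}{3}\sum_{t>f>v} h_{f<t} A_f$, a sum over faces $f$ containing $v$ (inside tetrahedra $t$). In the double tetrahedron each vertex lies on three faces, and the sum runs over both copies of the generating tetrahedron. By Theorem~\ref{equi def}.1 all faces of an equihedral tetrahedron are congruent, so $A_f = A_{f'}$ for all faces; combined with $h_f = r$ for all $f$ from the first part, every summand $h_{f<t} A_f$ equals the common value $r A$. Since every vertex belongs to the same number of faces (three per tetrahedron, doubled), each $V_v$ is the same fixed multiple of $rA$, giving $V_v = V_{v'}$ for all $v, v'$.

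The main obstacle, if any, will be bookkeeping the signs and the combinatorics of the double tetrahedron rather than any deep geometry: I must confirm that $c_t$ lies strictly inside each generating tetrahedron so that $h_f$ is positive (this is immediate once the circumcenter equals the interior incenter), and that each vertex meets the same set of faces with the same multiplicities across the two glued tetrahedra so that the counts in the $V_v$ sums match. Both are straightforward consequences of the full symmetry of the equihedral generating tetrahedron and the symmetric gluing defining $\operatorname{DT}$, so I anticipate no serious difficulty.
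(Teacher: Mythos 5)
Your argument is correct and follows essentially the same route as the paper: both use Theorem~\ref{equi def}.4 and \ref{equi def}.5 to identify $h_f$ with the (common) inradius, note that congruence of faces gives equal $A_f$, and then read off equality of the $V_v$ from the formula $V_v = \frac{1}{3}\sum_{t>f>v} h_{f<t} A_f$. Your version simply spells out the identification of $h_f$ with the inradius and the vertex--face counting in more detail than the paper does.
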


\begin{proof}
The definition of equihedral tetrahedron implies that the $A_{f}$ are equal
for all faces. Since the geometric center $c_{f}$, corresponds to the
circumcenter of $f$, Theorem~\ref{equi def}.4 and \ref{equi def}.5 combine to
show that the $h_{f}$ are all equal. Since, $V_{v}=\frac{1}{3}\sum
_{t>f>v}h_{f<t}A_{f}$, the $V_{v}$ are equal as well.
\end{proof}

We will now show that equihedral metrics have constant scalar curvature in the
sense of both (\ref{Lcsc eqn}) and (\ref{Vcsc eqn}).

\begin{proof}
[Proof of Theorem~\ref{csc main thm}.1]Let $\ell$ be an equihedral metric on
the double tetrahedron. Label the generating tetrahedron $1234$. We would like
to show that $\ell$ has constant $\mathcal{L}$-scalar curvature $\lambda
_{\mathcal{L}}$; i.e., that $\ell$ satisfies (\ref{Lcsc eqn}). By
Theorem~\ref{equi def}.2 and \ref{equi def}.3, an equihedral tetrahedron has
opposite edge lengths and opposite dihedral angles equal. This implies that
$\ell_{12}=\ell_{34}$, $\ell_{13}=\ell_{24}$, $\ell_{14}=\ell_{23}$,
$\beta_{12}=\beta_{34}$, $\beta_{13}=\beta_{24}$, and $\beta_{14}=\beta_{23}$.
One can easily check that this implies that the $K_{v}$ are equal for all
$v\in V$; hence $4K_{v}=\mathcal{EHR}$. By Lemma \ref{l sums equal},
$4L_{v}=\mathcal{L}$. Thus, $\frac{K_{v}}{L_{v}}=\mathcal{LEHR}$ as required.
\end{proof}

\begin{proof}
[Proof of Theorem~\ref{csc main thm}.2]Let $\ell$ be an equihedral metric on
the double tetrahedron. We would like to show that $\ell$ has constant
$\mathcal{V}$-scalar curvature $\lambda_{\mathcal{V}}$; i.e. that $\ell$
satisfies (\ref{Vcsc eqn}).

As above, all of the $K_{v}$ are equal. Then for each $v$, $K_{v}=\frac{1}%
{4}\mathcal{EHR}$. Since all of the $V_{v}$ are equal by Lemma
\ref{equihedral h and V equal}, we have that $V_{v}=\frac{3}{4}\mathcal{V}$
for every $v\in V$. Then the ratio $\frac{K_{v}}{V_{v}}=\frac{\mathcal{EHR}%
}{3\mathcal{V}}$ as required.
\end{proof}

We now show that there is a one-parameter family of constant scalar curvature
metrics in every conformal class by showing the existence of equihedral
metrics in every conformal class.

\begin{proof}
[Proof of Theorem~\ref{csc main thm}.3]We begin by fixing a conformal class,
$\{L_{ij}\}$. We would like to solve the following system of equations
$\ell_{ij}=\ell_{k\ell}$, $\ell_{ik}=\ell_{j\ell}$, and $\ell_{i\ell}%
=\ell_{jk}$ in our given conformal class. Using the conformal invariants from
Proposition~\ref{conformal invariant}, one obtains%

\begin{align*}
e^{\frac{f_{i}}{2}}  &  =\sqrt{\frac{L_{j\ell}L_{k\ell}}{L_{ij}L_{ik}}%
}e^{\frac{f_{\ell}}{2}}\\
e^{\frac{f_{j}}{2}}  &  =\sqrt{\frac{L_{i\ell}L_{k\ell}}{L_{jk}L_{ij}}%
}e^{\frac{f_{\ell}}{2}}\\
e^{\frac{f_{k}}{2}}  &  =\sqrt{\frac{L_{i\ell}L_{j\ell}}{L_{ik}L_{jk}}%
}e^{\frac{f_{\ell}}{2}}.
\end{align*}
We see that, modulo scaling, there is a unique equihedral metric in every
conformal class as required.
\end{proof}

A natural question is the uniqueness of constant scalar metrics in a conformal
class. On the double tetrahedron, constant $\mathcal{L}$-scalar curvature
metrics are not necessarily unique within a conformal class. Notice that the
previous proof implies that such additional constant $\mathcal{L}$-scalar
curvature metrics are not equihedral.

\begin{proof}
[Proof of Theorem~\ref{csc main thm}.4]There can be multiple constant
$\mathcal{L}$-scalar curvature metrics within a fixed conformal class. As a
specific example of this, consider the conformal class given by%
\[
L_{e}=1\text{, for all }e\in E.
\]
Constant scalar curvature metrics occur for the following conformal
parameters:%
\begin{align*}
F_{a}  &  =\left(  f_{1},f_{2},f_{3},f_{4}\right)  =\left(  0,0,0,0\right)
,\\
F_{b}  &  =\left(  f_{1},f_{2},f_{3},f_{4}\right)  \approx\left(
-1.233,-1.233,0,0\right)  .
\end{align*}
Notice that the equal length metric is given by $F_{a}$, and another
non-equihedral metric is given by $F_{b}$. We conjecture that non-equihedral
constant scalar curvature metrics exist in every conformal class.
\end{proof}

\subsection{Convexity of $\mathcal{LEHR}$}

In this section, we consider the convexity of the $\mathcal{LEHR}$ functional
within a fixed conformal class. We have previously shown that at an equal
length metric, the $\mathcal{LEHR}$ functional restricted to a conformal class
is convex. Direct computation of the Hessian allows us to analyze the
$\mathcal{LEHR}$ functional at equihedral metrics. The main results of this
section are the following.

\begin{theorem}
\label{lehr main thm}

\begin{enumerate}
\item The discrete Laplacian is negative semidefinite at equihedral metrics.

\item The $\mathcal{LEHR}$ functional is convex in a conformal class at some,
but not all, equihedral metrics.
\end{enumerate}
\end{theorem}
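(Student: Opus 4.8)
The plan is to analyze Theorem~\ref{lehr main thm} by computing the relevant Hessians directly, exploiting the symmetry reductions already established for equihedral metrics. Recall from Theorem~\ref{equi def} that an equihedral metric on the double tetrahedron satisfies $\ell_{12}=\ell_{34}$, $\ell_{13}=\ell_{24}$, $\ell_{14}=\ell_{23}$, with the same relations for the dihedral angles. This three-parameter family (modulo scaling) is exactly the orbit swept out by the conformal invariants of Proposition~\ref{conformal invariant}, so I would parametrize equihedral metrics by two essential ratios (say $\ell_{13}/\ell_{12}$ and $\ell_{14}/\ell_{12}$) after scaling.

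For part~(1), I would first write the discrete Laplacian explicitly. The Laplacian arising from the perpendicular bisector conformal structure is the matrix of second conformal derivatives of $\mathcal{EHR}$, i.e.\ $\partial^2\mathcal{EHR}/\partial f_v\partial f_{v'}$, whose entries are built from the heights $h_e$ and $h_f$ described in the paragraph preceding this subsection (following \cite{G1}, \cite{G3}). The key structural fact is that this Laplacian has nonnegative off-diagonal weights and each row sums to zero (the all-ones vector $(1,1,1,1)$ lies in its kernel, corresponding to the scaling invariance), so it is a genuine weighted graph Laplacian and hence negative semidefinite. At an equihedral metric, Lemma~\ref{l sums equal} and Lemma~\ref{equihedral h and V equal} force the relevant weights to respect the opposite-edge symmetry, and I would verify that the conformal weights (the $h_e A_f$-type quantities) remain nonnegative on the whole equihedral family. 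The main point to check is that the circumcentric heights do not change sign, which follows because at equihedral metrics the inscribed sphere touches each face at its circumcenter (Theorem~\ref{equi def}.4), forcing $h_f>0$.

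For part~(2), the Hessian of $\mathcal{LEHR}$ restricted to a conformal class at a critical point (an equihedral metric, by Theorem~\ref{csc main thm}.1) decomposes into the Laplacian term from part~(1) plus a curvature-variation term coming from the $f$-dependence of the $K_v$. Using the variation formula (\ref{var of ehr}b), the conformal Hessian at an $\mathcal{L}$-critical point is $\mathcal{L}^{-1}$ times $(\partial K_v/\partial f_{v'} - \lambda_{\mathcal{L}} \partial L_v/\partial f_{v'})$. I would compute this $4\times4$ matrix on the two-parameter equihedral family, again reducing its size using the opposite-vertex symmetry, and then determine its signature as a function of the shape parameters. The convexity statement ``at some but not all'' then amounts to exhibiting one equihedral shape where the restricted Hessian is positive semidefinite (the equal-length metric, already handled by the theorem at the start of this subsection) and one where an eigenvalue goes negative; I expect to locate the sign change numerically, analogously to the constant $t_*$ found in Proposition~\ref{one parameter family of tetrahedra PROP}.

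The hard part will be the part~(2) computation: unlike the metric Hessian of Section~\ref{metric section}, the conformal curvature-variation term $\partial K_v/\partial f_{v'}$ mixes the Schl\"afli-type dihedral-angle derivatives with the nontrivial dependence of the heights on the conformal factor, and there is no reason for the resulting matrix to be a graph Laplacian. I would therefore expect to rely on explicit formulas for $\partial\beta_e/\partial\ell_{e'}$ together with the chain rule through (\ref{pb conf}), and to carry out the eigenvalue/signature analysis along the one-parameter equihedral deformation rather than in closed form, confirming the transition by a continuity argument as in Proposition~\ref{one parameter family of tetrahedra PROP}.
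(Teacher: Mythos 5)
Your overall architecture matches the paper's: part (1) via nonnegativity of the dual weights $\ell_e^*/\ell_e$ plus diagonal dominance of the resulting graph Laplacian, and part (2) via the decomposition of the conformal Hessian into a Laplacian term and a curvature term, a symmetry reduction of the $4\times 4$ matrix along the one-parameter equihedral family $(t,1,1,1,1,t)$, and a continuity/numerical argument locating the sign change of an eigenvalue (the paper starts from the positive eigenvalues $\tfrac{4\sqrt 2}{9}$ at $t=1$, finds $\lambda_2\approx -0.238$ at $t=1.35$, and locates the transition at $t\approx 1.31471$). Part (2) of your plan is essentially the paper's proof.

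The one genuine gap is in part (1). By (\ref{dual length}), $\ell_e^*=\sum_{t>e}\tfrac12\left(h_{e<f}h_{f<t}+h_{e<f'}h_{f'<t}\right)$, so its nonnegativity requires control of \emph{both} the heights $h_{f<t}$ from the tetrahedron's circumcenter to the face circumcenters \emph{and} the in-face heights $h_{e<f}$ from each face's circumcenter to the edge midpoints. Your argument via Theorem~\ref{equi def}.4--5 handles only $h_f>0$ (the paper reaches the same conclusion from $\sum_f h_fA_f=3\mathcal{V}>0$ together with Lemma~\ref{equihedral h and V equal}). You still need $h_{e<f}\geq 0$, which is equivalent to each face's circumcenter lying inside that face, i.e., to all four faces being acute triangles. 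This is not automatic and is exactly the content of Lemma~\ref{positive edge height}, proved by the flattening argument of \cite{KVW}: fixing $a=\ell_{12}=\ell_{34}$ and $c=\ell_{14}=\ell_{23}$ and opening the tetrahedron into a planar parallelogram shows the flattened diagonal $d$ exceeds $b$, whence the law of cosines forces every face angle to be acute. Without this step, ``the conformal weights remain nonnegative on the whole equihedral family'' is an unverified assertion --- and, as the remark after Proposition~\ref{laplacian nsd} points out, nonnegativity of $\ell_e^*$ does fail for general metrics, so it genuinely has to be established on the equihedral family rather than inferred from the Laplacian being a ``graph Laplacian'' in form.
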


In order to analyze the $\mathcal{LEHR}$ functional in a conformal class, we
will compute the Hessian at a constant scalar curvature metric. First we need
the definition of dual length.

\begin{definition}
Let $(M,\mathcal{T},\ell)$ be a three-dimensional piecewise flat manifold. The
\emph{dual length} $\ell_{e}^{\ast}$ is defined as
\begin{equation}
\ell_{e}^{\ast}=\sum_{t>e}\frac{1}{2}(h_{e<f}h_{f<t}+h_{e<f^{\prime}%
}h_{f^{\prime}<t}), \label{dual length}%
\end{equation}
where $f$ and $f^{\prime}$ are the faces of $t$ containing $e.$
\end{definition}

\begin{remark}
Although we call $\ell_{e}^{\ast}$ a \textquotedblleft dual
length,\textquotedblright\ it is actually a (signed) area of a (dual) cell
orthogonal to the edge $e$. For further details see \cite{G1}.
\end{remark}

Now we will compute the Hessian of the $\mathcal{LEHR}$ functional. Since we
will only analyze the Hessian at constant scalar curvature metrics, we include
only that formula. The more general formula follows in a straightforward
fashion from the results presented in this paper and in \cite{G1}.

\begin{proposition}
The Hessian of the $\mathcal{LEHR}$ functional at a constant $\mathcal{L}%
$-scalar curvature metric is
\[
\frac{\partial^{2}\mathcal{LEHR}}{\partial f_{v} \partial f_{v^{\prime}}%
}=-\frac{2}{\mathcal{L}}\Delta_{vv^{\prime}}+\frac{1}{\mathcal{L}%
}N_{vv^{\prime}},
\]
where
\[
\triangle_{vv^{\prime}}=\left\{
\begin{array}
[c]{cc}%
\frac{\ell_{e}^{\ast}}{\ell_{e}} & \text{if }e=vv^{\prime},\\
-\sum_{e>v}\frac{\ell_{e}^{\ast}}{\ell_{e}} & \text{if }v=v^{\prime},\\
0 & \text{otherwise}%
\end{array}
\right.
\]
is the Laplacian matrix, and
\[
N_{vv^{\prime}}=\left\{
\begin{array}
[c]{cc}%
\frac{1}{4}(K_{e}-(\mathcal{LEHR})\ell_{e}) & \text{if }e=vv^{\prime},\\
\frac{1}{2}(K_{v} - (\mathcal{LEHR})L_{v}) & \text{if }v=v^{\prime},\\
0 & \text{otherwise.}%
\end{array}
\right.
\]

\end{proposition}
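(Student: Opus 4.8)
The plan is to obtain the Hessian by differentiating the first variation formula for $\mathcal{LEHR}$ a second time and then exploiting the constant scalar curvature hypothesis to discard the terms that would otherwise spoil the clean form. Starting from (\ref{var of ehr}), write $\frac{\partial\mathcal{LEHR}}{\partial f_v}=\mathcal{L}^{-1}\Phi_v$ with $\Phi_v=K_v-(\mathcal{LEHR})L_v$. Differentiating in $f_{v'}$ and using $\frac{\partial\mathcal{L}}{\partial f_{v'}}=L_{v'}$ gives
\[
\frac{\partial^2\mathcal{LEHR}}{\partial f_v\partial f_{v'}}=-\mathcal{L}^{-2}L_{v'}\Phi_v+\mathcal{L}^{-1}\left(\frac{\partial K_v}{\partial f_{v'}}-\frac{\partial(\mathcal{LEHR})}{\partial f_{v'}}L_v-(\mathcal{LEHR})\frac{\partial L_v}{\partial f_{v'}}\right).
\]
At a constant $\mathcal{L}$-scalar curvature metric the defining equation (\ref{Lcsc eqn}) forces $\Phi_v=0$ for every $v$, and hence also $\frac{\partial(\mathcal{LEHR})}{\partial f_{v'}}=\mathcal{L}^{-1}\Phi_{v'}=0$; two of the four terms therefore drop out, and what remains is $\mathcal{L}^{-1}\left(\frac{\partial K_v}{\partial f_{v'}}-(\mathcal{LEHR})\frac{\partial L_v}{\partial f_{v'}}\right)$. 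This is precisely why the statement is restricted to constant scalar curvature metrics, and it also explains why the diagonal of $N$ can be recorded in the form $\tfrac12(K_v-(\mathcal{LEHR})L_v)$, which vanishes there.

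Next I would compute the two surviving pieces. The derivative of $L_v=\tfrac12\sum_{e>v}\ell_e$ is immediate from $\frac{\partial\ell_e}{\partial f_w}=\tfrac12\ell_e$ whenever $w$ is an endpoint of $e$: off the diagonal only the shared edge $e=vv'$ contributes, giving $\frac{\partial L_v}{\partial f_{v'}}=\tfrac14\ell_e$, while on the diagonal $\frac{\partial L_v}{\partial f_v}=\tfrac12 L_v$. The substantive object is the conformal Hessian of $\mathcal{EHR}$, namely $\frac{\partial K_v}{\partial f_{v'}}=\frac{\partial^2\mathcal{EHR}}{\partial f_v\partial f_{v'}}$. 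Splitting this by the chain rule separates a \emph{connection} term $\sum_e\frac{\partial\mathcal{EHR}}{\partial\ell_e}\frac{\partial^2\ell_e}{\partial f_v\partial f_{v'}}$ from an \emph{intrinsic} term $\tfrac14\sum_{e>v,\,e'>v'}\frac{\partial^2\mathcal{EHR}}{\partial\ell_e\partial\ell_{e'}}\,\ell_e\ell_{e'}$. Because $\frac{\partial^2\ell_e}{\partial f_v\partial f_{v'}}=\tfrac14\ell_e$ for $e=vv'$ and $\frac{\partial\mathcal{EHR}}{\partial\ell_e}\,\ell_e=K_e$, the connection term contributes exactly $\tfrac14 K_e$ off the diagonal and $\tfrac12 K_v$ on it.

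The main obstacle is the intrinsic term, where I expect the discrete Laplacian to appear. Here I would invoke the explicit formulas of \cite{G1} for the derivatives of the dihedral angles with respect to edge lengths in terms of the signed heights $h_f$ and $h_e$, together with the Schl\"{a}fli formula, which supplies the linear relations among those angle derivatives. The claim to verify is that the intrinsic contraction collapses to $-2\Delta_{vv'}$ with the weights $\ell_e^{\ast}/\ell_e$ of (\ref{dual length}); this is the discrete analogue of the fact that the conformal linearization of scalar curvature is a Laplace-type operator, and it is where essentially all of the geometry is concentrated. Granting this identity, the assembly is routine: the intrinsic and connection terms give $\frac{\partial K_v}{\partial f_{v'}}=-2\Delta_{vv'}+\tfrac14 K_e$ off the diagonal and $-2\Delta_{vv}+\tfrac12 K_v$ on it, and subtracting $(\mathcal{LEHR})\frac{\partial L_v}{\partial f_{v'}}$ converts the bare curvature terms into $N_{vv'}=\tfrac14(K_e-(\mathcal{LEHR})\ell_e)$ and $N_{vv}=\tfrac12(K_v-(\mathcal{LEHR})L_v)$. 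Multiplying by $\mathcal{L}^{-1}$ yields the stated expression.
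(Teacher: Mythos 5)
Your derivation is correct and follows the route the paper itself intends: the paper states this proposition without an explicit proof, remarking only that it ``follows in a straightforward fashion from the results presented in this paper and in \cite{G1},'' and your computation is exactly that assembly --- differentiate the first variation formula, use the constant scalar curvature hypothesis to kill the $\Phi_v$ and $\partial(\mathcal{LEHR})/\partial f_{v^{\prime}}$ terms, and collect the connection term $\tfrac14 K_e$ (resp.\ $\tfrac12 K_v$) together with $-(\mathcal{LEHR})\,\partial L_v/\partial f_{v^{\prime}}$ into $N_{vv^{\prime}}$, all of which checks out. The one identity you grant rather than prove, namely that $\tfrac14\sum_{e>v,\,e^{\prime}>v^{\prime}}\frac{\partial^{2}\mathcal{EHR}}{\partial\ell_{e}\partial\ell_{e^{\prime}}}\,\ell_{e}\ell_{e^{\prime}}=-2\Delta_{vv^{\prime}}$ with weights $\ell_{e}^{\ast}/\ell_{e}$, is precisely the dihedral-angle-derivative computation of \cite{G1} and \cite{G3} that the paper likewise invokes without reproducing, so this is a citation rather than a gap.
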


\begin{corollary}
At an equihedral metric,
\[
N_{vv^{\prime}}=\left\{
\begin{array}
[c]{cc}%
\frac{1}{4}\left(  K_{e}-\frac{K_{v}\ell_{e}}{L_{v}}\right)  & \text{if
}e=vv^{\prime}\\
0 & \text{otherwise.}%
\end{array}
\right.
\]

\end{corollary}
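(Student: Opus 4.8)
The plan is to recognize this corollary as a direct specialization of the preceding Proposition, powered by the fact that equihedral metrics carry constant $\mathcal{L}$-scalar curvature. First I would invoke Theorem~\ref{csc main thm}.1, which asserts that an equihedral metric on the double tetrahedron has constant $\mathcal{L}$-scalar curvature; that is, there is a constant $\lambda_{\mathcal{L}}=\mathcal{LEHR}$ with $K_{v}=\lambda_{\mathcal{L}}L_{v}$ for every vertex $v$, equivalently $K_{v}/L_{v}=\mathcal{LEHR}$ for all $v$. I would emphasize that this ratio is a single number independent of the vertex, which is exactly what makes the substitutions below unambiguous.

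Next I would dispose of the two nonzero branches of $N_{vv^{\prime}}$ separately. For the diagonal entries $v=v^{\prime}$, the Proposition gives $N_{vv}=\tfrac{1}{2}(K_{v}-(\mathcal{LEHR})L_{v})$; substituting $\mathcal{LEHR}=K_{v}/L_{v}$ turns the parenthetical into $K_{v}-(K_{v}/L_{v})L_{v}=0$, so the diagonal contribution vanishes identically at an equihedral metric. This is the key simplification responsible for the collapse of the general formula down to a single off-diagonal expression. For the off-diagonal entries with $e=vv^{\prime}$, the Proposition gives $\tfrac{1}{4}(K_{e}-(\mathcal{LEHR})\ell_{e})$; substituting $\mathcal{LEHR}=K_{v}/L_{v}$ yields $\tfrac{1}{4}(K_{e}-\tfrac{K_{v}\ell_{e}}{L_{v}})$, precisely the claimed form. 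I would remark that because $K_{v}/L_{v}$ is the same constant for every vertex, it is immaterial which endpoint of $e$ is used as $v$ in the denominator, so the resulting expression is well-defined.

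I expect essentially no obstacle here: the entire argument is a routine substitution of the constant-scalar-curvature identity $K_{v}=(\mathcal{LEHR})L_{v}$ into the two cases of the general Hessian formula. The only point deserving a moment of care is confirming that $K_{v}/L_{v}$ is genuinely a single constant across all vertices, so that replacing $\mathcal{LEHR}$ by $K_{v}/L_{v}$ is unambiguous; but this is exactly the content of the equihedral hypothesis as recorded in Theorem~\ref{csc main thm}.1, so it requires no new computation.
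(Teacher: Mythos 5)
Your proposal is correct and follows essentially the same route as the paper: both reduce to the identity $\mathcal{LEHR}=K_{v}/L_{v}$ at an equihedral metric (the paper re-derives this from the constancy of the $K_{v}$ and $L_{v}$, while you cite Theorem~\ref{csc main thm}.1 directly, which encodes the same computation) and then substitute into the two nonzero branches of $N_{vv^{\prime}}$. Your added remark that the ratio $K_{v}/L_{v}$ is vertex-independent, so the choice of endpoint in the off-diagonal formula is immaterial, is a worthwhile clarification but not a different argument.
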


\begin{proof}
We have shown in the proofs of Theorem~\ref{csc main thm}.2 and
Lemma~\ref{l sums equal} that at an equihedral metric, the vertex curvatures
and the $L_{v}$ are constant for all $v\in V$. Then $K_{v}=\frac{1}%
{4}\mathcal{EHR}$, and $L_{v}=\frac{1}{4}\mathcal{L}$. So at an equihedral
metric, $(\mathcal{LEHR})\ell_{e}=-\frac{K_{v}\ell_{e}}{L_{v}}$ if
$e=vv^{\prime}$, and $(\mathcal{LEHR})L_{v}=K_{v}$ if $v=v^{\prime}$. These
formulas give the desired result.
\end{proof}

We would like to analyze the convexity of the $\mathcal{LEHR}$ functional at
equihedral metrics. First, we will show that the Laplacian matrix $\triangle$
is negative semidefinite at equihedral metrics.

Recall that a matrix, $A$, is said to be \emph{diagonally dominant} if
$|a_{ii}|\geq\sum_{j\neq i}|a_{ij}|$, where $a_{ij}$ denotes the entry in the
$i^{th}$ row and the $j^{th}$ column. The following theorem is well-known.

\begin{theorem}
\label{diag dom} A Hermitian, diagonally dominant matrix with real nonnegative
diagonal entries is positive semidefinite.
\end{theorem}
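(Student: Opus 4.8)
The plan is to reduce positive semidefiniteness to a statement about eigenvalues and then to control those eigenvalues via the Gershgorin circle theorem. Since $A$ is Hermitian, it is unitarily diagonalizable with real eigenvalues, and positive semidefiniteness is equivalent to all of these eigenvalues being nonnegative. So it suffices to show that every eigenvalue $\lambda$ of $A$ satisfies $\lambda\geq 0$.

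First I would recall (or reprove in one line) the Gershgorin circle theorem: every eigenvalue of $A=(a_{ij})$ lies in at least one of the closed discs in the complex plane centered at $a_{ii}$ with radius $R_i=\sum_{j\neq i}|a_{ij}|$. The short argument is that if $Ax=\lambda x$ with $x\neq 0$, and $i$ is chosen so that $|x_i|$ is the largest among the moduli of the entries of $x$, then the $i$-th component of the eigenvalue equation reads $(\lambda-a_{ii})x_i=\sum_{j\neq i}a_{ij}x_j$; taking absolute values and dividing by $|x_i|>0$ gives $|\lambda-a_{ii}|\leq\sum_{j\neq i}|a_{ij}|\,|x_j|/|x_i|\leq R_i$.

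Next I would combine this containment with the two hypotheses. Because $A$ is Hermitian, $\lambda$ is real, so the inequality $|\lambda-a_{ii}|\leq R_i$ places $\lambda$ in the real interval $[a_{ii}-R_i,\,a_{ii}+R_i]$. Diagonal dominance asserts $|a_{ii}|\geq R_i$, and since $a_{ii}$ is real and nonnegative we have $|a_{ii}|=a_{ii}$, so that $a_{ii}-R_i\geq 0$. Hence $\lambda\geq a_{ii}-R_i\geq 0$. As $\lambda$ was an arbitrary eigenvalue, all eigenvalues of $A$ are nonnegative, and therefore $A$ is positive semidefinite.

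There is no serious obstacle here, as the statement is classical; the only points requiring care are bookkeeping. One must invoke Hermiticity explicitly to force the eigenvalues onto the real axis, so that the Gershgorin disc can meaningfully be intersected with $\mathbb{R}$, and one must use the nonnegativity of the diagonal entries to upgrade the dominance inequality $|a_{ii}|\geq R_i$ to $a_{ii}\geq R_i$. Without the nonnegativity hypothesis the conclusion can fail (e.g.\ a negative scalar), so that assumption genuinely needs to be used.
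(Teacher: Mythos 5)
Your argument is correct and complete: the Gershgorin circle theorem localizes each eigenvalue in the disc centered at $a_{ii}$ of radius $R_i=\sum_{j\neq i}|a_{ij}|$, Hermiticity forces the eigenvalues onto the real axis, and diagonal dominance together with $a_{ii}\geq 0$ gives $a_{ii}-R_i\geq 0$, so every eigenvalue is nonnegative. The paper states this result as well-known and supplies no proof of its own, so there is nothing to compare against; the Gershgorin route you chose is the standard one, and you correctly identify where each hypothesis (Hermiticity, nonnegativity of the diagonal) is actually used.
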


\begin{proposition}
\label{laplacian nsd} The Laplacian $\triangle$ is negative semidefinite if
the dual lengths $\ell_{e}^{\ast}$ are nonnegative for a given metric.
\end{proposition}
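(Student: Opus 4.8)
The Laplacian $\triangle$ is negative semidefinite if the dual lengths $\ell_e^*$ are nonnegative.

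The matrix $\triangle_{vv'}$ has off-diagonal entries $\ell_e^*/\ell_e$ (for $e = vv'$) and diagonal entries $-\sum_{e>v}\ell_e^*/\ell_e$. So on the diagonal we get the negative of the sum of the off-diagonal entries in that row. This is the standard "graph Laplacian" sign pattern, except with a sign flip: a usual graph Laplacian has positive diagonal and negative off-diagonal and is positive semidefinite. Here the situation is the negation.

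Let me set up the plan.

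The plan is to apply Theorem~\ref{diag dom} to $-\triangle$. First I would observe that $-\triangle$ is a real symmetric matrix, hence Hermitian: its symmetry follows because $\triangle_{vv'}=\triangle_{v'v}=\ell_e^*/\ell_e$ for the edge $e=vv'$, and the diagonal entries are well-defined. The entries of $-\triangle$ are: $(-\triangle)_{vv}=\sum_{e>v}\ell_e^*/\ell_e$ on the diagonal and $(-\triangle)_{vv'}=-\ell_e^*/\ell_e$ off-diagonal.

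Second, I would verify the hypotheses of Theorem~\ref{diag dom}. Since the dual lengths $\ell_e^*$ are assumed nonnegative and edge lengths $\ell_e$ are positive (being lengths of a genuine metric), each ratio $\ell_e^*/\ell_e$ is nonnegative. Hence the diagonal entries $(-\triangle)_{vv}=\sum_{e>v}\ell_e^*/\ell_e$ are real and nonnegative, as required. For diagonal dominance, I must check $|(-\triangle)_{vv}|\ge\sum_{v'\ne v}|(-\triangle)_{vv'}|$. The left side equals $\sum_{e>v}\ell_e^*/\ell_e$ (the absolute value being unnecessary since the quantity is already nonnegative), and the right side equals $\sum_{v'\ne v}|\ell_e^*/\ell_e|=\sum_{e>v}\ell_e^*/\ell_e$, using again that each term is nonnegative so the absolute value may be dropped. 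These two sums are identical, so diagonal dominance holds with equality in every row.

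By Theorem~\ref{diag dom}, $-\triangle$ is positive semidefinite, and therefore $\triangle$ is negative semidefinite, which is the claim. The only subtle point is matching the index set of the sum defining the diagonal entry with the set of off-diagonal entries in the same row: I would note that the off-diagonal entries $(-\triangle)_{vv'}$ are nonzero precisely when $v'$ is joined to $v$ by an edge $e=vv'$, and these are exactly the edges $e>v$ appearing in the diagonal sum, so the bookkeeping is exact and the inequality is in fact an equality. I expect no serious obstacle here; the result is essentially the observation that $-\triangle$ is a weighted graph Laplacian with nonnegative edge weights $\ell_e^*/\ell_e$, which is a textbook positive-semidefinite object, and Theorem~\ref{diag dom} packages exactly this fact.
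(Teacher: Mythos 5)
Your proof is correct and follows essentially the same route as the paper: the paper's (one-line) proof also observes that nonnegative dual lengths make $\triangle$ diagonally dominant and then invokes Theorem~\ref{diag dom}. You have simply made explicit the step the paper leaves implicit, namely that the theorem is applied to $-\triangle$, whose diagonal entries are nonnegative and whose rows are diagonally dominant with equality.
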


\begin{proof}
Notice that if $\ell_{e}^{\ast}\geq0,$ then $\triangle$ is diagonally
dominant, and so by Theorem \ref{diag dom}, $\triangle$ is negative semidefinite.
\end{proof}

\begin{remark}
The set of metrics for which $\triangle$ is negative semidefinite forms an
open set containing the equal length metrics, as does the set of metrics for
which $\ell_{e}^{\ast}$ are positive. It is possible that our analogies for
Riemannian geometry depend on one of these two conditions, and that other
metrics correspond to some sort of more general infinitesimal geometry such as
sub-Riemannian geometry.
\end{remark}

We will now show that the Laplacian is negative semidefinite at equihedral metrics.

\begin{lemma}
\label{positive edge height}In an equihedral tetrahedron, all faces are acute
triangles. Thus $h_{e<f}\geq0$ for each edge $e$ and face $f.$
\end{lemma}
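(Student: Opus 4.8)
The plan is to reduce the claim to the purely planar statement that the common face of an equihedral tetrahedron is an acute triangle, and then invoke the standard fact that the circumcenter of an acute triangle lies in its interior.

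First I would use Theorem~\ref{equi def}.2 to record that in an equihedral tetrahedron opposite edges are equal, so that on setting $p=\ell_{12}=\ell_{34}$, $q=\ell_{13}=\ell_{24}$, $r=\ell_{14}=\ell_{23}$, each of the four faces is congruent to the single triangle with side lengths $p,q,r$. It therefore suffices to prove that this triangle is acute. Writing $P=p^{2}$, $Q=q^{2}$, $R=r^{2}$, the law of cosines identifies acuteness of the triangle with the simultaneous positivity of the three quantities $P+Q-R$, $Q+R-P$, and $R+P-Q$ (the angle opposite the side of length $r$ is acute exactly when $P+Q-R>0$, and similarly for the other two).

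The key step is to extract this positivity from the nondegeneracy of the tetrahedron. I would evaluate the Cayley--Menger determinant on the equihedral edge data and show that it factors, up to a positive constant, as
\[
\operatorname{CM}_{3}=4\,(P+Q-R)(Q+R-P)(R+P-Q).
\]
The cleanest route to this factorization is to realize the equihedral tetrahedron as inscribed in a rectangular box of dimensions $x,y,z$ with $x^{2}=\tfrac12(R+P-Q)$, $y^{2}=\tfrac12(P+Q-R)$, $z^{2}=\tfrac12(Q+R-P)$, whose edges are the face diagonals of the box; the tetrahedron then has volume $V=\tfrac13 xyz$, and combining this with $V=\sqrt{\operatorname{CM}_{3}/288}$ yields the displayed product. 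Since the metric lies in $\mathfrak{met}(\operatorname{DT})$ we have $\operatorname{CM}_{3}>0$, so the product of the three factors is positive. To conclude that each factor is positive I observe that no two of them can be simultaneously nonpositive: their pairwise sums are $(P+Q-R)+(Q+R-P)=2Q$, and likewise $2P$ and $2R$, all strictly positive. Hence at most one factor can be negative, but a single negative factor would render the product negative, contradicting $\operatorname{CM}_{3}>0$. Therefore all three factors are positive and every face is acute.

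Finally, acuteness of each face forces its circumcenter $c_{f}$ to lie strictly inside that face, so the signed perpendicular distance $h_{e<f}$ from $c_{f}$ to each edge $e$ is nonnegative (in fact positive), which is exactly the asserted conclusion. I expect the determinant factorization to be the main obstacle, and the box-inscription description to be the most economical way past it, since it simultaneously produces the factorization and makes transparent that nondegeneracy is precisely the positivity of the three acuteness expressions $x^{2},y^{2},z^{2}$.
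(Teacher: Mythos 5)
Your proof is correct, but it takes a genuinely different route from the paper's. The paper follows the hint in \cite{KVW}: it flattens the equihedral tetrahedron with $\ell_{12}=\ell_{34}=a$, $\ell_{13}=\ell_{24}=b$, $\ell_{14}=\ell_{23}=c$ by growing $\ell_{24}$ until two faces become coplanar, producing a parallelogram with sides $a,c$ and diagonals $b<d$; the parallelogram law $b^{2}+d^{2}=2a^{2}+2c^{2}$ then forces $b^{2}<a^{2}+c^{2}$, i.e.\ the angle opposite $b$ is acute, and symmetry handles the other angles. You instead extract acuteness from nondegeneracy via the factorization $\operatorname{CM}_{3}=4(P+Q-R)(Q+R-P)(R+P-Q)$, which is a clean and quantitatively stronger statement: it exhibits the positivity of all three acuteness expressions as \emph{equivalent} to $\operatorname{CM}_{3}>0$ (given that at most one factor can be nonpositive, since their pairwise sums are $2P,2Q,2R>0$), whereas the paper's argument only gives one direction. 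The one point you should tighten is the derivation of the factorization: the box realization with $x^{2}=\tfrac12(R+P-Q)$, etc., presupposes exactly the positivity you are trying to prove, so as written the derivation is circular on the degenerate or near-degenerate locus. This is easily repaired --- either verify the determinant identity by direct expansion, or note that both sides are polynomials in $P,Q,R$ agreeing on the nonempty open set where the box exists, hence agreeing identically --- but the repair should be stated. With that caveat addressed, your argument is complete, and the final step (circumcenter of an acute triangle is interior, hence $h_{e<f}\geq 0$) coincides with the paper's.
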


\begin{proof}
The proof that the faces of an equihedral tetrahedron are acute triangles
follows that suggested in \cite{KVW}. Let $1234$ be an equihedral tetrahedron
such that $\ell_{12}=\ell_{34}=a$, $\ell_{13}=\ell_{24}=b$, and $\ell
_{14}=\ell_{23}=c$. Flatten the tetrahedron by fixing $a$ and $c$; allow
$\ell_{24}$ to grow until faces $123$ and $134$ become coplanar, say when
$\ell_{24}=d$. One obtains the parallelogram $1234$ with sides of lengths $a$
and $c$ and with diagonals of lengths $b$ and $d$. Since $d>b$, the law of
cosines implies that the angle at vertex $2<123$ is acute. A similar argument
can be used to show all face angles are acute.

It is well-known that the circumcenter of an acute triangle lies in the
interior of the triangle, so $h_{e<f}\geq0$ as required.
\end{proof}

\begin{lemma}
\label{dual length pos} For any equihedral metric on $\operatorname{DT},$
$\ell_{e}^{\ast}\geq0$ for every edge.
\end{lemma}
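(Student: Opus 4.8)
The plan is to show that every summand in the definition
\[
\ell_e^\ast=\sum_{t>e}\frac{1}{2}\left(h_{e<f}h_{f<t}+h_{e<f^{\prime}}h_{f^{\prime}<t}\right)
\]
is nonnegative, whence the whole sum is nonnegative. On the double tetrahedron each edge $e$ lies in the two copies of the generating (equihedral) tetrahedron, and within each copy $e$ lies in exactly two faces $f,f^{\prime}$; so it suffices to prove that $h_{e<f}\geq 0$ and $h_{f<t}\geq 0$ for every edge $e$ and face $f$ of an equihedral tetrahedron.

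First I would handle the edge heights. By Lemma~\ref{positive edge height} all faces of an equihedral tetrahedron are acute, and the circumcenter of an acute triangle lies in its interior, so $h_{e<f}\geq 0$ for every edge $e$ and face $f$. This is precisely what the preceding lemma supplies, so the only remaining work is the sign of the face heights $h_{f<t}$.

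The hard part will be showing $h_{f<t}\geq 0$, i.e.\ that the circumcenter $c_t$ of the tetrahedron sits on the interior side of each face plane. Here I would invoke the characterizations in Theorem~\ref{equi def}: for an equihedral tetrahedron the circumcenter $c_t$ coincides with the incenter (Theorem~\ref{equi def}.5), and the inscribed sphere is tangent to each face exactly at the face circumcenter $c_f$ (Theorem~\ref{equi def}.4). Consequently the segment from $c_t$ to $c_f$ is the perpendicular from the incenter to the face, it has length equal to the inradius, and, since the incenter always lies in the interior of the tetrahedron, it points to the interior side. With the sign convention of \cite{G1} this makes $h_{f<t}$ equal to the (positive) inradius, so $h_{f<t}\geq 0$.

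Combining the two sign facts, each product $h_{e<f}h_{f<t}$ (and likewise $h_{e<f^{\prime}}h_{f^{\prime}<t}$) is a product of nonnegative numbers, so every summand is nonnegative and $\ell_e^\ast\geq 0$ for every edge $e$. The main subtlety to get right is the orientation convention for the signed heights $h_{f<t}$; the appeal to the incenter lying in the interior of the tetrahedron is exactly what settles that sign, and once it is settled the conclusion is immediate.
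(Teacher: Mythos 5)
Your proof is correct, and it follows the paper's overall structure (reduce to the two sign facts $h_{e<f}\geq 0$ and $h_{f<t}\geq 0$, with the former supplied by Lemma~\ref{positive edge height}), but it establishes the sign of the face heights $h_{f<t}$ by a genuinely different route. The paper first proves, in Lemma~\ref{equihedral h and V equal}, that all the $h_f$ are \emph{equal} (using Theorem~\ref{equi def}.4 and \ref{equi def}.5), and then deduces nonnegativity from the identity $\sum_{f}h_{f}A_{f}=3\mathcal{V}$ together with the positivity of $\mathcal{V}$ and of the areas $A_f$: equal numbers with a positive weighted sum must all be positive. You instead use Theorem~\ref{equi def}.4 and \ref{equi def}.5 directly to identify the segment from $c_t$ to $c_f$ with the perpendicular from the incenter to the face, so that $h_{f<t}$ is the inradius and the sign is settled by the incenter lying in the interior of the tetrahedron. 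Your version is more self-contained (it bypasses the auxiliary lemma and the volume identity) and yields the slightly stronger geometric fact that $h_{f<t}$ equals the positive inradius; the paper's averaging argument is softer in that it never needs to pin down the sign convention geometrically, only the equality of the $h_f$ and the positivity of the total volume. The one point you should make explicit, as you note, is that the sign convention of \cite{G1} declares $h_{f<t}$ positive precisely when $c_t$ lies on the interior side of the face plane; with that stated, your argument is complete.
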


\begin{proof}
Since $\sum_{f\in F}h_{f}A_{f}=3\mathcal{V}$, both $\mathcal{V}$ and $A_{f}$
are positive, and $h_{f}$ is the same for each face by
Lemma~\ref{equihedral h and V equal}, we must have that $h_{f}\geq0$ for all
$f\in F.$ It follows from Proposition \ref{positive edge height} that
$h_{e<f}\geq0.$ Since $\ell_{e}^{\ast}=h_{e<f}h_{f}+h_{e<f^{\prime}%
}h_{f^{\prime}},$ the result follows.
\end{proof}

\begin{proof}
[Proof of Theorem~\ref{lehr main thm}.1]That the Laplacian is negative
semidefinite at equihedral metrics follows directly from
Proposition~\ref{laplacian nsd} and Lemma~\ref{dual length pos}.
\end{proof}

We will show below that many, but not all, equihedral metrics are local minima
of $\mathcal{LEHR}$. Note that we are still unable to find which equihedral
metrics are local minima of $\mathcal{VEHR}$ due to the influence of the
normalization factor on the Hessian.

\begin{proof}
[Proof of Theorem~\ref{lehr main thm}.2]We will show that the $\mathcal{LEHR}$
functional is convex at some but not all equihedral metrics. Consider the one
parameter family of length vectors (\ref{length variation}), which is a
variation through equihedral metrics. Note that along this family, each vertex
looks the same, and so we can write the Hessian of $\mathcal{LEHR}$ restricted
to the conformal class as
\[
\left.  Hess\left(  \mathcal{LEHR}\right)  \right\vert _{\ell\left(  t\right)
}=%
\begin{bmatrix}
b & c & a & a\\
c & b & a & a\\
a & a & b & c\\
a & a & c & b
\end{bmatrix}
\]
where
\[
a=\frac{\partial^{2}\mathcal{LEHR}}{\partial f_{1}\partial f_{3}%
},\;\;\;b=\frac{\partial^{2}\mathcal{LEHR}}{\partial f_{1}^{2}},\;\;\;c=\frac
{\partial^{2}\mathcal{LEHR}}{\partial f_{1}\partial f_{2}},
\]
and we always calculate at $f_{1}=f_{2}=f_{3}=f_{4}=0.$ Furthermore, we know
that $\mathcal{LEHR}$ is invariant under uniform scaling, so $\left(
1,1,1,1\right)  $ is a zero eigenvalue, which implies that $2a+b+c=0.$ One can
now check that the eigenspaces decompose as follows:
\[%
\begin{tabular}
[c]{|l|c|c|}\hline
eigenspace & spanning vectors & eigenvalues\\\hline
$V_{\lambda_{1}}$ & $%
\begin{array}
[c]{c}%
\left(  1,-1,0,0\right) \\
\left(  0,0,1,-1\right)
\end{array}
$ & $-2a-2c,$\\\hline
$V_{\lambda_{2}}$ & $%
\begin{array}
[c]{c}%
\left(  1,1,-1,-1\right)
\end{array}
$ & $-4a,$\\\hline
$V_{0}$ & $%
\begin{array}
[c]{c}%
\left(  1,1,1,1\right)
\end{array}
$ & $0.$\\\hline
\end{tabular}
\ \ \ \
\]

One can show that at the equal length metric $\ell_{1}$ where all lengths
equal $1$, the eigenvalues of the Hessian of $\mathcal{LEHR}$ are $\left(
\frac{4\sqrt{2}}{9},\frac{4\sqrt{2}}{9},\frac{4\sqrt{2}}{9},0\right)  $. Since
the eigenvalues are continuous and the zero eigenvalue persists, the Hessian
of $\mathcal{LEHR}$ is positive semidefinite in a neighborhood of $\ell_{1}.$
For $t=1.35$, the eigenvalue $\lambda_{2}\approx-0.238$, so by continuity of
the eigenvalues, it must be zero for some $t$. A computer algebra package
allows us to approximate this value to be $t\approx1.31471$.
\end{proof}

\section{The Yamabe invariant\label{general section}}

As in the smooth case, one can consider invariants based on the
Einstein-Hilbert functional. In the smooth setting, one considers the Yamabe
constant of a conformal class of Riemannian metrics. Recall that if $g$ is a
Riemannian metric on a manifold $M$, the conformal class $\left[  g\right]  $
is defined by
\[
\left[  g\right]  =\left\{  e^{f}g:f\in C^{\infty}\left(  M\right)  \right\}
.
\]
One then considers the \emph{Yamabe constant} $Y\left(  M,\left[  g\right]
\right)  $ to be
\begin{equation}
Y\left(  M,\left[  g\right]  \right)  =\inf\left\{  \mathcal{NEH}\left(
M,g_{0}\right)  :g_{0}\in\left[  g\right]  \right\}  .
\end{equation}
One can show (see, e.g., \cite{SY}) that for any $M^{n}$ and $g,$
\[
Y\left(  M^{n},\left[  g\right]  \right)  \leq Y\left(  S^{n},\left[
g_{\operatorname{can}}\right]  \right)
\]
where $g_{\operatorname{can}}$ is any round metric, and so it makes sense to
consider the \emph{Yamabe invariant} for $M$ (also called the sigma constant),
namely%
\begin{equation}
Y\left(  M^{n}\right)  =\sup Y\left(  M^{n},\left[  g\right]  \right)  ,
\label{yamabe smooth}%
\end{equation}
where the sup is over all conformal classes. Note that the Yamabe invariant
has been computed for a number of manifolds, including the sphere, real
projective space, sphere bundles over the circle, and hyperbolic manifolds
(\cite{AN}, \cite{And2}, \cite{BN}, \cite{Kob}, \cite{Sch2}).

One may consider similar questions for the Einstein-Hilbert-Regge functional.
We now have two formulations of Yamabe constants based on different normalizations:

\begin{definition}
Let $\left[  \ell\right]  $ denote the set of all metrics which are conformal
to $\ell$, i.e.,
\[
\left[  \ell\right]  =\left\{  \tilde{\ell}\in\mathfrak{met}:\tilde{\ell
}\left(  e\right)  =\exp\left[  \frac{1}{2}\left(  f\left(  v\right)
+f\left(  v^{\prime}\right)  \right)  \right]  \ell\left(  e\right)  \text{
for all }e=vv^{\prime}\right\}  .
\]
The $\mathcal{L}$\emph{-Yamabe constant} $Y_{\mathcal{L}}\left(
M,\mathcal{T},\left[  \ell\right]  \right)  $ is defined to be
\[
Y_{\mathcal{L}}\left(  M,\mathcal{T},\left[  \ell\right]  \right)
=\inf\left\{  \mathcal{LEHR}\left(  M,\mathcal{T},\tilde{\ell}\right)
:\tilde{\ell}\in\left[  \ell\right]  \right\}  .
\]
The $\mathcal{V}$\emph{-Yamabe constant} $Y_{\mathcal{V}}\left(
M,\mathcal{T},\left[  \ell\right]  \right)  $ is defined to be
\[
Y_{\mathcal{V}}\left(  M,\mathcal{T},\left[  \ell\right]  \right)
=\inf\left\{  \mathcal{VEHR}\left(  M,\mathcal{T},\tilde{\ell}\right)
:\tilde{\ell}\in\left[  \ell\right]  \right\}  .
\]
The $\mathcal{L}$\emph{-Yamabe invariant} $Y_{\mathcal{L}}\left(
M,\mathcal{T}\right)  $ is defined to be
\[
Y_{\mathcal{L}}\left(  M,\mathcal{T}\right)  =\sup\left\{  Y_{\mathcal{L}%
}\left(  M,\mathcal{T},\left[  \ell\right]  \right)  :\ell\in\mathfrak{met}%
\right\}  .
\]
The $\mathcal{V}$\emph{-Yamabe invariant} $Y_{\mathcal{V}}\left(
M,\mathcal{T}\right)  $ is defined to be
\[
Y_{\mathcal{V}}\left(  M,\mathcal{T}\right)  =\sup\left\{  Y_{\mathcal{V}%
}\left(  M,\mathcal{T},\left[  \ell\right]  \right)  :\ell\in\mathfrak{met}%
\right\}  .
\]

\end{definition}

The $\mathcal{L}$-Yamabe invariant seems well-motivated on the double
tetrahedron, since Theorem \ref{Combined eigen decomposition THM} indicates
that at the unique $\mathcal{L}$-Einstein metric, with regard to the Hessian
of the $\mathcal{LEHR}$, conformal directions span the eigenspaces
corresponding to nonnegative eigenvalues and the orthogonal space is spanned
by eigenspaces corresponding to negative eigenvalues. The definition is less
well-motivated for the $\mathcal{V}$-Yamabe invariant since the equal length
metrics on $\operatorname{DT}$ are local minima for $\mathcal{VEHR}$. We still
define the Yamabe invariant in analogy because it is known that for
sufficiently fine triangulations, $\mathcal{EHR}$ and $\mathcal{V}$ converge
to $\mathcal{EH}$ and volume (see \cite{CMS}), and so for certain
triangulations one would expect the definition to be meaningful.

There is additional motivation for these definitions in slightly different
contexts. In particular, there are the following results for the corresponding
boundary value problem where the boundary geometry is fixed. In \cite{BI},
Bobenko and Izmestiev show that for a convex polyhedron with one interior
vertex, the Hessian of $\mathcal{EHR}$ has only one positive eigenvector with
a one-dimensional eigenspace. Also, in \cite{IS}, Izmestiev and Schlenker show
that for any convex polyhedron, the eigenspace corresponding to positive
eigenvectors has dimension at least as large as the number of interior
vertices. These two results indicate that $\mathcal{EHR}$ tends to be convex
within a conformal class.

We now consider the well-posedness of computing the Yamabe invariants. It is
an easy consequence of the definition of $\mathcal{LEHR}$ that
\[
0\leq\mathcal{LEHR}\leq2\pi,
\]
and so $Y_{\mathcal{L}}\left(  \operatorname{DT}\right)  $ certainly exists.
We are not yet able to compute its value, however. In fact, although we have
found a large class of constant $\mathcal{L}$-scalar curvature metrics, we are
not even able to compute the $\mathcal{L}$-Yamabe constant for the conformal
class containing the equal length metrics.

\begin{problem}
Does $Y_{\mathcal{L}}\left(  \operatorname{DT},\left[  \ell_{k}\right]
\right)  =\mathcal{LEHR}\left(  \operatorname{DT},\ell_{k}\right)  ,$ where
$\ell_{k}$ is an equal length metric? Does $Y_{\mathcal{L}}\left(
\operatorname{DT}\right)  =\mathcal{LEHR}\left(  \operatorname{DT},\ell
_{k}\right)  $?
\end{problem}

Normalization with respect to volume is more complicated. A natural question
is whether $\mathcal{VEHR}$ is bounded in any sense. On the double
tetrahedron, it must be bounded below by $0.$ For the Yamabe invariant to be
meaningful, we would need the supremum of Yamabe constants to be bounded. One
might simply ask if $\mathcal{VEHR}$ is bounded on the set of constant scalar
curvature metrics (which are critical points to $\mathcal{VEHR}$ in a
conformal class). It turns out this is not true.

\begin{proposition}
\label{VEHR unbdd in const curv}On the double tetrahedron, the $\mathcal{VEHR}%
$ functional is not bounded above on the set of constant scalar curvature metrics.
\end{proposition}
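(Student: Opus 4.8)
The plan is to exhibit an explicit one-parameter family of constant scalar curvature metrics along which $\mathcal{VEHR}\to+\infty$. The natural candidate is the equihedral family from (\ref{length variation}), namely $\ell(t)=(t,1,1,1,1,t)$ with $t\in[1,\sqrt{2})$. Its opposite edges are equal, $\ell_{12}=\ell_{34}=t$, $\ell_{13}=\ell_{24}=1$, $\ell_{14}=\ell_{23}=1$, so by Theorem~\ref{equi def} each member is equihedral, and hence by Theorem~\ref{csc main thm}.1--2 each is a constant scalar curvature metric (in both the $\mathcal{L}$ and $\mathcal{V}$ senses). It therefore suffices to show $\mathcal{VEHR}(\operatorname{DT},\ell(t))\to\infty$ as $t\to\sqrt{2}^{-}$.

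Since $\mathcal{VEHR}=\mathcal{EHR}/\mathcal{V}^{1/3}$, I would establish two facts: (i) $\mathcal{V}(\ell(t))\to 0$, and (ii) $\mathcal{EHR}(\ell(t))$ stays bounded away from $0$. For (i) I would use the standard realization of an equihedral tetrahedron inside a rectangular box with side lengths $p,q,r$, where the three squared opposite-edge lengths are $q^2+r^2$, $p^2+r^2$, $p^2+q^2$. Matching these to $\{t^2,1,1\}$ forces $p=q=t/\sqrt{2}$ and $r^2=(2-t^2)/2$; note $r^2>0$ is exactly the admissibility range $t<\sqrt{2}$. The generating tetrahedron has volume $V=\tfrac13 pqr$, so $\mathcal{V}=2V=\tfrac23 pqr\to 0$ as $t\to\sqrt{2}$ (equivalently $\operatorname{CM}_{3}\to 0$, placing the limit on the boundary of $\mathfrak{met}(\operatorname{DT})$).

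For (ii) the edge lengths remain bounded, so the only way $\mathcal{EHR}=\sum_{e}(2\pi-2\beta_{e})\ell_{e}$ could collapse would be for the relevant dihedral angles to tend to $\pi$. This does not happen: as $t\to\sqrt{2}$ the box degenerates in the $r$-direction and the tetrahedron flattens onto a unit square with its two diagonals. At each of the four unit edges the two incident faces fold into the same half-plane of the limiting square, so $\beta_{e}\to 0$ and $K_{e}=(2\pi-2\beta_{e})\ell_{e}\to 2\pi$ there, while the two diagonal edges $12,34$ have $\beta_{e}\to\pi$ and $K_{e}\to 0$. Hence $\mathcal{EHR}\to 8\pi>0$. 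Combining (i) and (ii) gives $\mathcal{VEHR}=\mathcal{EHR}/\mathcal{V}^{1/3}\to\infty$, which proves the proposition (and also supplies the family needed for Proposition~\ref{NEHR unbounded PROP}).

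The step I expect to be the main obstacle is controlling $\mathcal{EHR}$ in the degenerate limit, i.e.~checking that the dihedral angles at the unit edges tend to $0$ rather than to $\pi$; the box realization makes this transparent, since one can compute the components of the opposite vertices perpendicular to a given edge explicitly and pass to the limit $r\to 0$. For the conclusion one really only needs $\mathcal{EHR}$ bounded below by a positive constant, so it would in fact be enough to show that a single edge has its dihedral angle bounded away from $\pi$; identifying the precise limit $8\pi$ is a convenient by-product rather than a necessity.
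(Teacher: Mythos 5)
Your proof is correct and follows essentially the same route as the paper: take the equihedral (hence constant scalar curvature) family $\ell(t)=(t,1,1,1,1,t)$ and let $t\to\sqrt{2}^{-}$, so that $\mathcal{V}\to 0$ while $\mathcal{EHR}\to 8\pi$, forcing $\mathcal{VEHR}\to\infty$. Your identification of the limiting dihedral angles is in fact the accurate one --- the angles tending to $\pi$ are $\beta_{12}$ and $\beta_{34}$ at the two diagonal edges (the paper's proof writes $\beta_{23}$ in place of $\beta_{34}$, which is inconsistent both with equihedrality, since opposite dihedral angles must agree, and with the stated limit $8\pi$).
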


\begin{proof}
Consider the one-parameter family of equihedral, hence constant scalar
curvature, metrics on the double tetrahedron given by (\ref{length variation}%
). As $t$ increases to $\sqrt{2}$, the generating tetrahedron becomes a flat
square with two diagonals. It follows that the volume $\mathcal{V}(t)$ goes to
zero, the dihedral angles $\beta_{12}$ and $\beta_{23}$ go to $\pi,$ and the
dihedral angles at all other edges go to zero. Thus $\mathcal{EHR}$ approaches
$8\pi.$ Therefore, we can make the $\mathcal{VEHR}$ functional as large as we
want simply by letting $t$ approach $\sqrt{2}$ from below.
\end{proof}

\noindent Note that Proposition \ref{VEHR unbdd in const curv} does not by
itself imply that the Yamabe invariant of $\operatorname{DT}$ is infinity,
since it is possible that there are smaller constant scalar curvature metrics
than the ones considered.

\begin{problem}
Is $Y_{\mathcal{V}}\left(  \operatorname{DT}\right)  =\infty$?
\end{problem}

In conclusion, we will make some comments about computation of the Yamabe
constants and Yamabe invariants for general triangulated piecewise flat
manifolds. In general, we will need to prove a lower bound for $\mathcal{LEHR}%
$ or $\mathcal{VEHR}$ within a conformal class just to compute the Yamabe
constant for that class, and then upper bounds to compute the Yamabe
invariant. There are fairly easy bounds for $\mathcal{LEHR}$, but we will need
an additional \textquotedblleft fatness\textquotedblright\ criterion for
$\mathcal{VEHR}$. Note that fatness requirements are quite common in theorems
in piecewise flat geometry (e.g., \cite{CMS}, \cite{Sto}). The most natural
fatness condition for the volume normalized functional is the following:

\begin{definition}
A piecewise flat manifold $\left(  M,\mathcal{T},\ell\right)  $ is
$\varepsilon$-fat if
\begin{equation}
\frac{\mathcal{V}}{\left(  \sum\ell_{e}\right)  ^{3}}\geq\varepsilon>0.
\label{fatness}%
\end{equation}

\end{definition}

The idea is that the volume cannot become too small without the lengths
becoming small. Notice that the fatness condition is satisfied if, for
instance, there is lower bound for the volume of each tetrahedron in terms of
its maximum edge length, though in this case $\varepsilon$ would depend on the
number of edges in $\mathcal{T}.$

The following theorem gives bounds on the $\mathcal{LEHR}$ and $\mathcal{VEHR}%
$ functionals.

\begin{proposition}
\label{proposition lower bound}Let $\left(  M,\mathcal{T},\ell\right)  $ be a
triangulated piecewise flat three-manifold and let $D_{M}$ be the maximal edge
degree (tetrahedra incident on an edge). Then

\begin{itemize}
\item For $\mathcal{LEHR}\left(  M,\mathcal{T},\ell\right)  $ we have
\[
2\pi-\pi D_{M}\leq\mathcal{LEHR}\left(  M,\mathcal{T},\ell\right)  \leq2\pi.
\]

\item If the triangulation is $\varepsilon$-fat, then there exists a constant
$C\left(  D_{M},\varepsilon\right)  $ such that
\[
\mathcal{VEHR}\left(  M,\mathcal{T},\ell\right)  \geq C
\]

\end{itemize}
\end{proposition}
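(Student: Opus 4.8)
The plan is to derive both bounds directly from the definition
$\mathcal{EHR}=\sum_{e}K_{e}=\sum_{e}\left(2\pi-\sum_{t\ni e}\beta_{e<t}\right)\ell_{e}$, using only two elementary facts: every dihedral angle of a nondegenerate tetrahedron satisfies $0<\beta_{e<t}<\pi$, and each edge lies in at most $D_{M}$ tetrahedra. No variational machinery is needed; everything reduces to edgewise estimates on $K_{e}$ followed by division by the normalization.

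For the $\mathcal{LEHR}$ bounds, I would first discard the nonnegative dihedral angles to get $K_{e}\leq 2\pi\ell_{e}$ for each edge, so $\mathcal{EHR}\leq 2\pi\mathcal{L}$, and dividing by $\mathcal{L}>0$ yields $\mathcal{LEHR}\leq 2\pi$. For the lower bound, since at most $D_{M}$ tetrahedra meet $e$ and each contributes a dihedral angle strictly less than $\pi$, we have $\sum_{t\ni e}\beta_{e<t}<\pi D_{M}$, hence $K_{e}>\left(2\pi-\pi D_{M}\right)\ell_{e}$. Summing over edges gives $\mathcal{EHR}>\left(2\pi-\pi D_{M}\right)\mathcal{L}$, and dividing by $\mathcal{L}$ produces $\mathcal{LEHR}>2\pi-\pi D_{M}$. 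This half is essentially a one-line computation once the angle inequalities are recorded, and it holds regardless of the sign of $2\pi-\pi D_{M}$.

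For the $\mathcal{VEHR}$ bound the starting point is the same estimate $\mathcal{EHR}>\left(2\pi-\pi D_{M}\right)\mathcal{L}$, which I would combine with $\varepsilon$-fatness. Writing $c_{0}=2\pi-\pi D_{M}$ and rewriting the fatness hypothesis $\mathcal{V}/\mathcal{L}^{3}\geq\varepsilon$ as $\mathcal{V}^{1/3}\geq\varepsilon^{1/3}\mathcal{L}$, the only care required is the sign of $c_{0}$. When $c_{0}\geq 0$ (low edge degree) the estimate forces $\mathcal{EHR}>0$ and hence $\mathcal{VEHR}>0$, so any $C\leq 0$ works. When $c_{0}<0$, dividing $\mathcal{EHR}>c_{0}\mathcal{L}$ by $\mathcal{V}^{1/3}>0$ gives $\mathcal{VEHR}>c_{0}\,\mathcal{L}/\mathcal{V}^{1/3}$; since $c_{0}<0$ and fatness bounds $\mathcal{L}/\mathcal{V}^{1/3}\leq\varepsilon^{-1/3}$ from above, multiplying by $c_{0}$ flips the inequality to give $\mathcal{VEHR}>c_{0}\varepsilon^{-1/3}=\left(2\pi-\pi D_{M}\right)\varepsilon^{-1/3}$. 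Thus one may take $C\left(D_{M},\varepsilon\right)=\left(2\pi-\pi D_{M}\right)\varepsilon^{-1/3}$ (replaced by $0$ when $D_{M}\leq 2$).

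The computations are routine, and the main---indeed essentially the only---subtlety is the direction of the inequality in this last step: because the lower bound $c_{0}\mathcal{L}$ on $\mathcal{EHR}$ can be negative, dividing by the volume normalization and then invoking fatness requires tracking the sign of $c_{0}$ so that the fatness inequality is applied in the correct direction. Everything else follows immediately from the uniform angle bounds and the definition of $\varepsilon$-fatness.
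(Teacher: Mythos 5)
Your proposal is correct and follows essentially the same route as the paper: edgewise bounds $\left(2\pi-\pi D_{M}\right)\ell_{e}\leq K_{e}\leq 2\pi\ell_{e}$ from the dihedral angle bounds, division by $\mathcal{L}$ for the first part, and the fatness inequality $\mathcal{L}/\mathcal{V}^{1/3}\leq\varepsilon^{-1/3}$ combined with the sign of $2\pi-\pi D_{M}$ for the second, which is exactly the paper's bound $\mathcal{VEHR}\geq\min\left\{0,2\pi-\pi D_{M}\right\}\varepsilon^{-1/3}$. Your explicit tracking of the sign of $c_{0}$ just spells out what the paper compresses into the $\min\left\{0,\cdot\right\}$.
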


\begin{proof}
For $\mathcal{LEHR}\left(  M,\mathcal{T},\ell\right)  ,$ the estimate is:
\[
2\pi-\pi D_{M}\leq\frac{\sum_{e}\left(  2\pi-\sum_{t}\beta_{e,t}\right)
\ell_{e}}{\sum_{e}\ell_{e}}\leq2\pi.
\]
For $\mathcal{VEHR}\left(  M,\mathcal{T},\ell\right)  ,$ we see that
\[
\mathcal{VEHR}\left(  M,\mathcal{T},\ell\right)  =\frac{\sum_{e}\left(
2\pi-\sum_{t}\beta_{e,t}\right)  \ell_{e}}{\mathcal{V}^{1/3}}\geq\min\left\{
0,2\pi-\pi D_{M}\right\}  \varepsilon^{-1/3}.
\]

\end{proof}

\begin{remark}
The condition of fatness in \cite{CMS} is slightly different, stating that
there is a constant $\delta>0$ such that for each $n$-simplex $\sigma^{n},$
\[
\max\left\{  \ell_{e}:e\in E\right\}  ^{-n}\left\vert \sigma^{n}\right\vert
\geq\delta,
\]
where $\left\vert \sigma^{n}\right\vert $ is the $n$-dimensional volume of
$\sigma.$ If one assumes the condition from \cite{CMS}, then
\[
\delta\leq\frac{\left\vert \sigma^{3}\right\vert }{\max\left\{  \ell_{e}:e\in
E\right\}  ^{3}}\leq\frac{\left\vert E\right\vert \mathcal{V}}{\left(
\sum\ell_{e}\right)  ^{3}},
\]
and so we can take $\varepsilon=\delta/\left\vert E\right\vert $ in
(\ref{fatness}), and we get a bound
\[
\mathcal{VEHR}\left(  M,\mathcal{T},\ell\right)  \geq\min\left\{  0,2\pi-\pi
D_{M}\right\}  \left\vert E\right\vert ^{1/3}\delta^{-1/3}.
\]

\end{remark}

Proposition \ref{proposition lower bound} justifies taking the infimum of
$\mathcal{LEHR}$ or $\mathcal{VEHR}$ over appropriate subsets of a conformal
class. Note that the estimates depends on edge degree, so that it would not be
useful if one wished to take a sequence of triangulations converging to a
smooth manifold. This motivates the following problem.

\begin{problem}
Within a discrete conformal class, is $\mathcal{LEHR}$ or $\mathcal{VEHR}$
bounded below by a constant independent of the triangulation?
\end{problem}

Note that in the smooth case, one can essentially use a reformulation of
$\mathcal{NEH}$ and a H\"{o}lder estimate to prove that such a lower bound
exists (see, e.g., \cite[Chapter V]{SY}).

Recall that for $\operatorname{DT},$ the known $\mathcal{V}$-Einstein metric
is a local minimum in $\mathfrak{met}.$ A natural question is whether there
are any negative eigenvalues at all for the Hessian of $\mathcal{VEHR}$ at a
$\mathcal{V}$-Einstein metric; in other words, is there a necessity for
maximizing to find a critical point. We have found the following:

\begin{theorem}
\label{theorem existence negative directions}There is a triangulation of the
three-sphere which admits a family of constant scalar curvature metrics such
that the maximum of $\mathcal{LEHR}$ over that family is a $\mathcal{L}%
$-Einstein metric and the maximum of $\mathcal{VEHR}$ over that family is a
$\mathcal{V}$-Einstein metric.
\end{theorem}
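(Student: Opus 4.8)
The plan is to produce a triangulation genuinely different from $\operatorname{DT}$, because the double tetrahedron itself cannot work: by Theorem~\ref{Combined eigen decomposition THM} the equal length metric is a local minimum of $\mathcal{VEHR}$ in all of $\mathfrak{met}(\operatorname{DT})$ (every nonzero eigenvalue of the Hessian is positive), so along \emph{any} smooth family through it the Einstein metric is a local minimum, never a maximum. The obstruction is structural: on $\operatorname{DT}$ each edge has degree two, so $K_e=(2\pi-2\beta_e)\ell_e>0$ for every metric, and degeneration drives $\mathcal{EHR}$ \emph{up} toward $8\pi$ (see Proposition~\ref{VEHR unbdd in const curv}). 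To reverse this behavior I would choose a triangulation in which each edge meets at least three tetrahedra, so that as a metric degenerates the dihedral angle sum $\sum_{t}\beta_{e,t}$ can exceed $2\pi$, forcing negative edge curvatures and pushing the functionals \emph{down}. This sign reversal is precisely the ingredient needed to make an Einstein metric a maximum rather than a minimum along a family.

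The concrete candidate is the boundary of the $4$-simplex (the $5$-cell): a triangulation of $S^{3}$ with five vertices, ten edges, ten faces, and five tetrahedra, in which every edge has degree three and whose symmetry group acts transitively on vertices and on edges. By this transitivity the equal length metric is simultaneously $\mathcal{L}$-Einstein and $\mathcal{V}$-Einstein, since the dihedral angles $\beta_e$ and the quantities $V_e=\ell_e\,\partial\mathcal{V}/\partial\ell_e$ are then independent of the edge $e$. Next I would produce the required family of constant scalar curvature metrics by deforming within a one-parameter subfamily of metrics invariant under a vertex-transitive subgroup of the symmetry group. For such a subfamily the vertex curvatures $K_v$, the lengths $L_v$, and the dual volumes $V_v$ are all independent of $v$, so equations (\ref{Lcsc eqn}) and (\ref{Vcsc eqn}) hold automatically; hence every metric in the family has constant $\mathcal{L}$- and $\mathcal{V}$-scalar curvature, exactly as the equihedral family does on $\operatorname{DT}$.

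With the family in hand I would express $\mathcal{LEHR}(t)$ and $\mathcal{VEHR}(t)$ as explicit functions of the deformation parameter $t$, computing the dihedral angles from the spherical cosine law and the volume from the Cayley--Menger determinant $\operatorname{CM}_3$, with the Einstein metric at the symmetric value $t=t_0$. By symmetry each functional is critical at $t_0$; the goal is then to show that the second derivative is negative there, so that $t_0$ is a local maximum, and that each function continues to decrease toward the degenerate ends of the admissible interval, where the newly negative edge curvatures dominate. Combining a negative second derivative at $t_0$ with monotone decrease toward the endpoints would identify $t_0$ as the global maximum of both $\mathcal{LEHR}$ and $\mathcal{VEHR}$ over the family, which is the assertion of the theorem.

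The main obstacle is the concavity computation: verifying that the second derivatives of $\mathcal{LEHR}$ and $\mathcal{VEHR}$ are negative at the Einstein metric requires differentiating the dihedral angles and the volume twice, and the volume enters through a cube root, so the expressions are considerably messier than for the linear-in-length normalization. I expect to need the same kind of computer-algebra assistance already used in Proposition~\ref{one parameter family of tetrahedra PROP} and Theorem~\ref{lehr main thm}. A secondary point requiring care is admissibility: one must check that $\operatorname{CM}_3>0$ for every tetrahedron along the entire family and locate the degenerate endpoints at which the edge curvatures turn negative, in order to justify that the maximum is attained in the interior at the Einstein metric rather than escaping to the boundary as it does in Proposition~\ref{VEHR unbdd in const curv}.
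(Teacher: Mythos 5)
Your overall architecture matches the paper's: pick a highly symmetric triangulation of $S^{3}$, observe that metrics invariant under a vertex-transitive group automatically satisfy the constant scalar curvature equations (\ref{Lcsc eqn}) and (\ref{Vcsc eqn}) because $K_{v}$, $L_{v}$, and $V_{v}$ are then independent of $v$, and then verify that the symmetric (Einstein) metric maximizes the functionals over that family. Your opening observation is also correct and worth making explicit: $\operatorname{DT}$ itself cannot furnish the $\mathcal{VEHR}$ half of the statement, since the equal length metric is a local minimum of $\mathcal{VEHR}$ in all of $\mathfrak{met}(\operatorname{DT})$. Where you diverge from the paper is the choice of example: the paper uses the $600$-cell (with its six-parameter family of metrics making all tetrahedra isometric), whereas you propose the boundary of the $4$-simplex with a two-parameter $\mathbb{Z}_{5}$-invariant family.

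The gap is that you give no evidence that the $5$-cell is on the right side of the dichotomy that the theorem is really about. The entire content of the statement is the sign of the Hessian of $\mathcal{LEHR}$ and $\mathcal{VEHR}$ in the directions transverse to the conformal ones, and this is precisely what distinguishes a triangulation that works from one that does not: on $\operatorname{DT}$ the transverse eigenvalue of $\mathcal{VEHR}$ is \emph{positive}, so the Einstein metric is a minimum along the equihedral family, and the theorem fails there. Your heuristic --- edge degree at least three permits $\sum_{t}\beta_{e<t}>2\pi$ near degeneration --- concerns boundary behavior of the family, not the second-order behavior at the critical point, and it does not determine the sign you need. Indeed, at the equal length metric of the $5$-cell the angle sum is $3\arccos(1/3)\approx 0.59\cdot 2\pi$, still far from $2\pi$, so the curvature is strongly positive and it is entirely plausible that the $5$-cell behaves like $\operatorname{DT}$ rather than like the $600$-cell (whose angle sum $5\arccos(1/3)\approx 0.98\cdot2\pi$ makes it nearly flat, which is presumably why the negative directions appear there). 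Since the theorem is an existence claim, deferring the concavity computation to a computer algebra system is only acceptable if the candidate is one for which the computation is known to succeed; as written, your proof could terminate with the discovery that $t_{0}$ is a minimum of $\mathcal{VEHR}$ over your family, leaving nothing proved. You should either carry out the second-derivative computation for the $5$-cell, or switch to the $600$-cell as the paper does.
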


\begin{proof}
[Proof (sketch)]This example was first suggested to us by J. Weeks. One can
consider the $600$-cell, which is the boundary of a regular polytope in
$\mathbb{R}^{4}$. It can be given a metric such that every tetrahedron is
isometric to the other. The combinatorics are such that every vertex looks
like every other, and it is an easy consequence that such a metric must have
constant scalar curvature in either normalization. This gives a six parameter
family of constant scalar curvature metrics on the $600$-cell. One can then
check that this family has a maximum when all lengths are the same, which
gives an Einstein metric. More details will appear in \cite{CGY}.
\end{proof}

Theorem \ref{theorem existence negative directions} indicates that there are
positive and negative eigenvalues for $\mathcal{LEHR}$ and $\mathcal{VEHR}$ at
Einstein metrics. A major problem is to classify these directions.

\begin{problem}
Is there a geometric way to classify the positive, negative, and zero
eigenspaces for the Hessian of $\mathcal{LEHR}$ or $\mathcal{VEHR}$? Do we
need to restrict to certain classes of triangulations (e.g., with sufficient fatness)?
\end{problem}

\end{document}